\newtheorem{thm}{Theorem}[section]
\newtheorem{lem}[thm]{Lemma}
\newtheorem{cor}[thm]{Corollary}
\newtheorem{prop}[thm]{Proposition}
\newtheorem{conj}{Conjecture}[section]
\newtheorem{rem}{Remark}[section]
\newtheorem{defn}{Definition}[section]
\numberwithin{equation}{section}
\newcommand{\R}{\mathbb{R}}
\def\Pb{\ifmmode{\Bbb P}\else{$\Bbb P$}\fi}
\def\Z{\ifmmode{\Bbb Z}\else{$\Bbb Z$}\fi}
\def\C{\ifmmode{\Bbb C}\else{$\Bbb C$}\fi}
\def\S{\ifmmode{S^2}\else{$S^2$}\fi}
\def\supp{\operatorname{supp}}
\def\i{\mathrm{int}}
\def\S{\cal S}
\definecolor{OklahomaRed}{HTML}{841617}
\definecolor{OklahomaRed}{HTML}{841617}
\definecolor{KTHBlue}{HTML}{004791}
\definecolor{KTHBlue}{HTML}{004791}
\newcommand{\AxisRotator}[1][rotate=0]{%
    \tikz [x=0.25cm,y=0.60cm,line width=.2ex,-stealth,#1] \draw (0,0) arc (-150:150:1 and 1);
}
\begin{document}
\title{An Ancient Stacked Pancake Solution to Mean Curvature Flow}

\begin{abstract} 
    We construct an embedded ancient solution to the mean curvature flow which is qualitatively given by a ``stack'' of two ancient pancakes joined by a neck. Our solution is closed, non-convex, and contained in a slab.
\end{abstract} 

\author{Alexander Mramor and Louis Yudowitz}
\address{Department of Mathematics, University of Oklahoma, Norman, OK 73019, USA
\newline
\newline
\indent Department of Mathematics, KTH Royal Institute of Technology, Lindstedtsv\"agen 25, 114 28 Stockholm, Sweden}
\date{}
\email{amramor@ou.edu, yudowitz@kth.se}

\maketitle

\section{Introduction}\label{intro}

    Ancient solutions to mean curvature flow (MCF)
    are flows which exist for all negative times. Such special solutions are of central importance in the singularity analysis of MCF, as well as being natural analogues of complete ancient solutions to the heat equation in submanifold geometry. They have been intensely studied under various geometric and topological restrictions and many have been constructed. An interesting recent family of examples of such flows are the ``ancient pancakes'' constructed in \cite{blt_pancakes}. These examples are ancient flows of convex hypersurfaces that are rotationally symmetric and trapped in a fixed slab region (which is orthogonal to the axis of rotation). They are also symmetric under reflection about a hyperplane which is parallel to the slab.

In this article, we construct a new ancient solution to the mean curvature flow which qualitatively corresponds to two parallel copies of an ancient pancake solution joined by a neck. Stated more precisely, our main result is:

    \begin{thm}\label{mainthm} 
        For every $n \geq 3$ there exists an ancient stack of two pancakes. That is, there exists an ancient solution of MCF $M^n_t \subset \mathbb{R}^{n+1}$ defined for all $t \in (-\infty, 0]$ with the following properties:

        \begin{enumerate}
        \item $M^n_t$ is a closed hypersurface and is contained in a slab.
            \item $M^n_t$ is $O(n)\times O(1)$ invariant (rotationally and reflection symmetric), embedded, and non-convex. 
            \item Moreover, the profile curve $\Gamma_t$ of $M^n_t$ is graphical over the axis of rotation and at any time $t \leq 0$, $\Gamma_t$ has $2$ local maxima and $1$ local minimum.

        \end{enumerate}
    \end{thm}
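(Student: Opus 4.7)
The natural approach is to construct a sequence of smooth MCF solutions $\{M^{(i)}_t\}_{t\in[-T_i,0]}$ with $T_i \to \infty$, each emerging from a carefully chosen $O(n)\times O(1)$-symmetric stacked-pancake-like initial datum, and to extract a smooth subsequential limit as $i\to\infty$ that defines the desired ancient flow $M_t$. As seed data I would construct a one-parameter family of embedded, symmetric hypersurfaces $\Sigma^L$ whose profile curves have exactly two strict local maxima (of equal height, by the reflection symmetry) joined across a strict local minimum; a natural ansatz is to take two vertically translated copies of a suitable time-slice of the Bourni--Langford--Tinaglia ancient pancake from \cite{blt_pancakes} and glue them with a catenoidal neck of radius depending on $L$.

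\textbf{Selection of parameters and barriers.} For each approximation time $-T_i$ I would use a continuity/degree argument to choose $L=L_i$ so that $M^{(i)}_t$ satisfies a normalization at $t=0$, for instance that the neck of $M^{(i)}_0$ has prescribed waist radius $r_\ast>0$. The justification is that for very thin necks the flow pinches before $t=0$, while for very thick ones the flow evolves with a persistent fat neck at $t=0$; continuity in $L$ then delivers an intermediate $L_i$ realizing waist $r_\ast$. Two translated copies of the ancient pancake from \cite{blt_pancakes} can serve as avoidance-principle barriers, trapping each ``half'' of $M^{(i)}_t$ in a fixed slab uniformly in $i$, which together with pseudolocality and standard curvature estimates yields uniform local-in-time geometric bounds on every compact subset of $(-\infty,0]$.

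\textbf{Passage to the limit and verification of (1)--(3).} Hamilton's compactness theorem then produces a smooth subsequential limit $M_t$ defined for all $t\in(-\infty,0]$. The symmetries pass to the limit by uniqueness for smooth MCF; the slab confinement passes from the barrier estimate; and the normalization at $t=0$ prevents degeneration, yielding a non-convex $M_0$ with the asserted profile topology. For arbitrary $t\leq 0$, a Sturmian nodal-counting argument for critical points of $\Gamma_t$ (whose number is nonincreasing forward in time) together with the prescribed shape at $t=0$ forces the ``two maxima, one minimum'' structure for all $t\leq0$, giving (2).

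\textbf{Main obstacle.} The delicate point is controlling the neck uniformly in $i$ during the long backward evolution: I must rule out both pinching and the two bulges merging into a single pancake, not only at $t=0$ but throughout $[-T_i,0]$. The backward-in-time dynamics of a catenoidal neck of the approximations is unstable, so it is not automatic that the waist radius remains bounded away from $0$ and from the bulge radius on the entire interval. I expect this step to require quantitative estimates on the profile curve $\Gamma^{(i)}_t$ exploiting the $O(n)\times O(1)$ symmetry and the above nodal-count monotonicity, together with explicit translating or shrinking barriers adapted to the neck region, so as to promote the endpoint normalization chosen in the selection step into a uniform geometric bound throughout the approximation interval.
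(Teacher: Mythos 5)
Your high-level strategy (old-but-not-ancient approximators, a continuity/degree argument to select the neck parameter, barriers, limit) matches the paper's. But the crux of your argument --- obtaining uniform curvature bounds on compact parabolic cylinders so that Hamilton's compactness applies --- is precisely the step the authors say they were \emph{unable} to carry out, and it is where your proposal stalls. You correctly flag it as the ``main obstacle'' but then only gesture at ``quantitative estimates\ldots together with explicit translating or shrinking barriers'' without supplying them. Pseudolocality does not rescue this: it gives short-time control from an initial slice, not uniform-in-$i$ bounds over the entire (lengthening) backward interval, and the neck region is exactly where graphicality and curvature could degenerate. The paper sidesteps this by passing to a Brakke-flow limit via the entropy bound (Lemma \ref{ancient_flow_existence}) and only \emph{afterwards} proving regularity (Lemma \ref{smooth_seq}, Lemma \ref{silky_smooth}); that two-stage route is the key structural idea you are missing.

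Two further gaps. First, your proposal never explains why $n\ge 3$ is needed; in the paper this enters crucially in Claim 1 of Lemma \ref{smooth_seq}, where a higher-dimensional catenoid (which lies in a slab only for $n\ge 3$) is used in a Sturmian intersection-counting argument to rule out the singular set of the limit touching the axis. Without that ingredient it is unclear how you would exclude a neckpinch-at-the-axis degeneracy in the limit. Second, your continuity parameter is a waist-radius normalization at $t=0$, which presupposes that the neck exists and is smooth at $t=0$ for the critical parameter --- circular reasoning if the goal is to show the critical flow does not pinch. The paper instead uses a topological dichotomy (number of connected components at a time $T_m(s)$ when the maximum height reaches a fixed value) and takes $m^\ast\in\partial N_1$, then perturbs slightly to $\overline m>m^\ast$; this avoids the circularity because connectedness is stable under the Brakke continuity established in Lemma \ref{continuous_dependence}. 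You should replace the waist-radius normalization with a dichotomy of this kind, and replace Hamilton compactness with Brakke compactness followed by a regularity argument that actually uses the rotational symmetry (Sturmian theory, barriers, and the $n\ge 3$ catenoid) to clear the singular set.
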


\begin{figure}\centering
\includegraphics[width=0.85\textwidth]{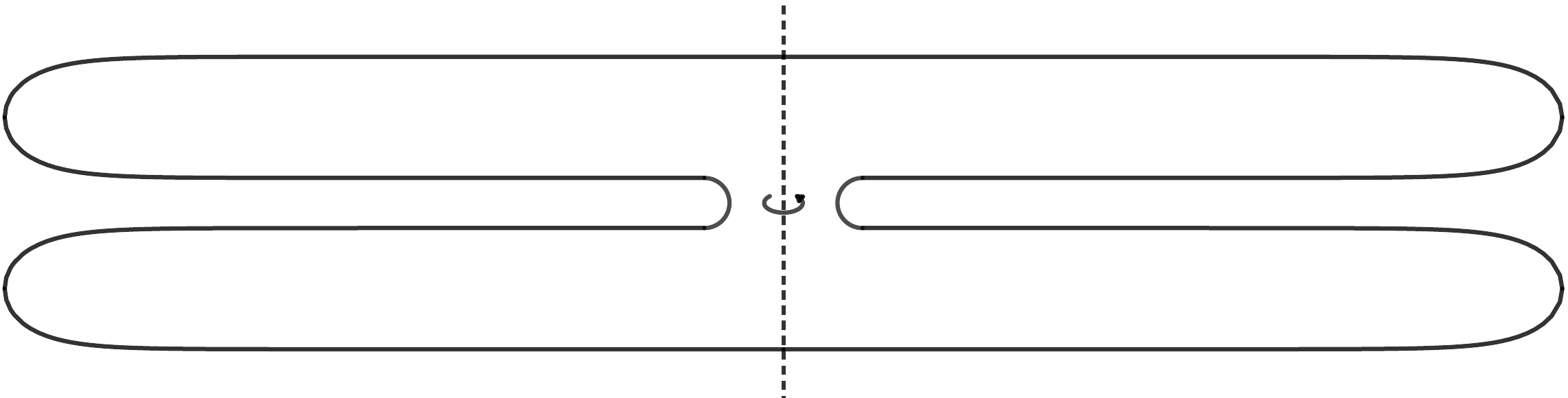}
\caption{A ``stack'' of two ancient pancakes.}\label{fig:pancake stack}
\end{figure}

    Informally, the local maxima mentioned above correspond to ``tips'' of the pancakes and the local minimum corresponds to a ``neck'' joining them. The construction will be reflection symmetric across the hyperplane perpendicular to the rotation axis which passes through the local minimum; throughout the article we use these symmetries, particularly the rotational symmetry, quite strongly. Ideally, one can construct similar ancient flows with minimal (or no) symmetry, say in the spirit of Traizet \cite{traizet} for minimal surfaces --- for instance, perhaps one may stack ancient polygons \cite{blt_poly}.

    The inspiration for this construction comes from the observation that, for $n \geq 3$, the minimal catenoid lies in a slab and is asymptotic to the sides of said slab. Moreover, the ancient pancakes are also asymptotic to the sides of the smallest slab they lie in as $t \to -\infty$. It therefore seems reasonable that in suitable dimensions ancient pancake stackings can be constructed by first joining them together using catenoids to create approximate ancient solutions, and then applying a Newton method type argument similar to the gluing procedure for minimal surfaces.    

    Such a strategy has been successfully carried out in the parabolic setting in previous works; consider, for instance, \cite{BreKap}. However, here we proceed by using a continuity argument, closer in spirit to the rigorous construction of Hamilton's degenerate dumbbell in \cite{aag} or the more recent construction of genus 1 self shrinkers in \cite{Chu2023GenusoneSI}. This method gives less explicit information about the solution constructed, though. For instance, it is not clear that the backwards limit of our ancient solution is, outside some large parabolic neighborhood $U \times (-T, 0]$, very close to a disjoint union of two parallel ancient pancakes. On the other hand, our approach is quicker and less technical than many gluing constructions. 
        
    The assumption that $n \geq 3$, compared to the ansatz given above, enters the proof in a somewhat roundabout way through an intersection point argument using that higher dimensional catenoids lie in slabs of finite thickness in Lemma \ref{smooth_seq}. This argument does not seem to extend to the case $n = 2$. This failure at low dimension seems to be natural both with the ansatz in mind and that, by a further intersection point argument, one can see that the necks of the ancient stacks must be trapped in a bounded region for all times. This suggests that the limiting profile of the neck, if smooth and well defined, should be a catenoid as $t \to -\infty$, so that the backwards limit of the ancient flows are disjoint unions of planes and catenoids which is ruled out by the halfspace theorem in $\mathbb{R}^3$. Alternately, perhaps in the $n = 2$ case one can instead construct an ancient solution where for $t \ll 0$ the neck flows out of a ``cusp'' along the axis of rotation corresponding to in a sense a singularity at $t = -\infty$ and, instead of proceeding as in Lemma \ref{smooth_seq}, argue that the limiting flow must be an almost regular flow using a modification of Theorem 1.7 in \cite{BK} which in fact would suffice for our purposes. Related phenomena (considering we use time recenterings of approximating flows) was explored in \cite{SunChen_mult2plane}, where the authors construct a rotationally symmetric mean curvature flow which converges to a plane with multiplicity 2 in the long term. Although we do not seriously study the backwards asymptotics of our ancient solution here, understanding them more clearly may be useful in answering more refined uniqueness questions discussed in \cite{Mra1}. 
   
    Our solution is interesting for a number of reasons. Most basically, it gives a new example of an ancient solution trapped in a slab and seems to be the first example of a non-convex compactly embedded one. As such, it shows that Theorem 1.2 in \cite{blt_pancakes} fails without the convexity hypothesis, at least when $n\ge 3$. 
    
    Another motivation comes from the following result: in \cite{chini_moller}, Chini and M{\o}ller showed that the convex hull of the spacetime track of a properly immersed mean curvature flow in $\mathbb{R}^{n+1}$ must either be all of $\mathbb{R}^{n+1}$ (that is, entire), a slab (collapsed), or a halfspace $\mathbb{R}_{+}^{n+1}$. In particular, the convex hull may not be a wedge. There is the question, of course, of whether or not this result is sharp. Currently, the only example of an ancient flow corresponding to a halfspace above, due to John Man Shun Ma in \cite{ma_immersed_halfspace}, is an immersed curve shortening flow. A natural question is if our construction can be in some sense iterated to produce more (embedded) examples of ancient flows whose convex hulls sweep out a halfspace, corresponding to what we'll refer to as a "half-infinite" stack of ancient pancakes. Because of its relevance we highlight it, stated optimistically: 
    \begin{conj}\label{stack_conj} For each $n \geq 3$, there exists an ancient solution $M^n_t \subset \R^{n+1}$ whose convex hull sweeps out a halfspace corresponding to a half-infinite stack of pancakes. 
    \end{conj} 
    
    In the context of our methods what seems to be the most obvious idea towards showing this is to first try to directly extend our construction to a stacking of $2 < k < \infty$ pancakes, perhaps using induction at the step of creating the old-but-not-ancient approximating solutions (see section \ref{old_not_ancient_construction}). Supposing such a construction can be carried out, by taking a limit of these ancient solutions as $k \to \infty$ it seems probable, given the estimates one has at hand, that one could obtain a smoothly embedded ancient flow of $\mathbb{R}^{n+1}$ whose convex hull sweeps out exactly a halfspace in all dimensions $n \geq 3$ as long as one has uniform $C^0$ bounds on the profile curves (as graphs over the axis of rotation) at any given time.

    A related, but less direct, approach is the following. First, it seems we should be able to apply our methods to construct an infinite, simply periodic stack of ancient pancakes, one whose convex hull is entire; using periodicity there are some steps which seem more straightforward than in the finite stacking case. With this in hand, by approximating such a flow with sensibly designed compact ones and recentering them appropriately it seems plausible that one can construct a sequence of old-but-not-ancient flows which limit to a half-infinite stack of pancakes, fulfilling the conjecture above. 

    \textbf{Acknowledgments:} During his time at Copenhagen, when some of this work was completed, AM was supported by CPH-GEOTOP-DNRF151 from the Danish National Research Foundation (via the GeoTop centre) and is grateful for their assistance. LY was supported by foundations managed by the Royal Swedish Academy of Sciences, as well as the G\"oran Gustafsson foundation. The authors also sincerely thank Mat Langford for many helpful conversations through the course of this project as well as Paco Mart\'{\i}n for his interest and feedback and  Alec Payne for his helpful comments, particularly concerning Lemma \ref{continuous_dependence}.
\section{Preliminaries}\label{prelims}
    In this section we give a non-exhaustive account of some of the results and notions we will use in this paper, primarily focusing on those concerning the flow. We start with the most basic definition:

    \begin{defn} 
        A (smooth) mean curvature flow of embedded hypersurfaces in $\mathbb{R}^{n+1}$ is given by a manifold $M^n$ and a family $F: M \times I \to \mathbb{R}^{n+1}$ of embeddings satisfying
        \begin{equation}
            \frac{\partial F}{\partial t}\left(x,t\right) = -H \nu,
        \end{equation}
        where $I \subseteq \mathbb{R}$ is some nonempty interval, $H$ is the mean curvature of $F$, and $\nu$ is the unit normal vector.
    \end{defn}

    We shall denote the image of $F$ at time $t$ by $M_t$.

\subsection{Weak solutions}

We shall also require the following weak formulation(s) of the flow, starting with the most well known GMT version: 

    \begin{defn}[Brakke flow]
        A (n-dimensional integral) Brakke flow is a family of Radon measures $\mu_t$ such that, on an interval $I \subseteq \mathbb{R}$:

        \begin{enumerate}
            \item For almost every $t \in I$ there exists an integral $n$-dimensional varifold $V(t)$ with $\mu_t = \mu_{V(t)}$ so that $V(t)$ has locally bounded first variation and has mean curvature vector $\vec{H}$ orthogonal to Tan$(V(t), \cdot)$ a.e. 
            \item For a bounded interval $[t_1, t_2] \subset I$ and any compact set $K \subset \mathbb{R}^{n+1}$,
            \begin{equation}
                \int_{t_1}^{t_2} \int_K (1 + H^2) d\mu_t dt < \infty.
            \end{equation}
            \item (Brakke's inequality) If $I \subset [t_1, t_2]$ and $\phi \in C^1_c(\mathbb{R}^{n+1} \times [t_1, t_2])$ with $\phi \geq 0$, then
            \begin{equation}
                \int_{V(t_2)} \phi(\cdot, t_2) d\mu_{t_2}  \leq \int_{V(t_1)} \phi(\cdot, t_1) d\mu_{t_1} + \int_{t_1}^{t_2} \int_{V(t)} -\phi H^2 + H \langle \nabla \phi, \nu \rangle + \frac{d\phi}{dt} d\mu_t dt.
            \end{equation} 
        \end{enumerate}
    \end{defn}

    The study of Brakke flows itself is quite rich, and a comprehensive introduction to Brakke flows can be found in, say, \cite{Ton, schulze_grenoble_notes}. Here, we will briefly and informally discuss a couple results which will be needed about them. First is \textbf{Brakke's compactness theorem}, which says that, for a sequence of Brakke flows with uniform area bounds on parabolic cylinders, one may exact a subsequence which converges to another Brakke flow. This convergence will be in the sense of Radon measures for all times and as varifolds for almost all times and if each Brakke flow in the sequence is unit regular and cyclic the limit will be as well. For the area bounds it suffices to have a uniform bound on the  Colding-Minicozzi entropy $\lambda(\Sigma)$, developed in \cite{CMentropy}. A second important result is \textbf{Brakke's regularity theorem}, which says that Brakke flows with density bounds sufficiently close to 1 in a backwards parabolic neighborhood will be smooth with bounded curvature in a smaller neighborhood. Although we are mainly concerned with smooth flows in this paper, our assumptions generally will not be strong enough to employ, for instance, Arzel\'a--Ascoli (in particular, a compactness theorem in the $C^k_{loc}$ topology), which necessitates working in this wider class of solutions. As is well known, there are a number of ways to make sense of weak solutions for the flow. Another one, which captures, in essence, uniqueness properties, will also play a role for us:

    \begin{defn}[Weak set flow]\label{original_defn} 
        Let $W$ be an open subset of a Riemannian manifold and consider $K \subset W$. Let $\{\ell_t\}_{t \geq 0}$ be a one -parameter family of closed sets with initial condition $\ell_0 = K$ such that the space-time track $\cup \ell_t \subset W$ is relatively closed in $W$. Then $\{\ell_t\}_{t \geq 0}$ is a weak set flow for $K$ if, for every smooth closed surface $\Sigma \subset W$ disjoint from $K$ and serving as the initial data for a smooth MCF defined on $[a,b]$, we have     
        \begin{equation}
            \ell_a \cap \Sigma_a = \emptyset \implies \ell_t \cap \Sigma_t = \emptyset
        \end{equation} 
        for each $t \in [a,b]$.
    \end{defn}

The set theoretic level set flow is the largest weak set flow. 

    \begin{defn}[Level set flow] 
        The level set flow of a set $K \subset W$, which we denote by $K_t$, is the maximal weak set flow. That is, a one-parameter family of closed sets $K_t$ with $K_0 = K$ such that if a weak set flow $\ell_t$ satisfies $\ell_0 = K$ then $\ell_t \subset K_t$ for each $t \geq 0$. The existence of a maximal weak set flow is verified by taking the closure of the union of all weak set flows with a given initial data. 
    \end{defn}

    An interesting phenomenon which may occur is called \textbf{fattening}. This is when the level set flow develops a nonempty interior and can be seen as a measure of uniqueness. When, for instance, the flow of a hypersurface is non-fattening and has only finitely many singularities there can only be one ``well behaved'' (unit regular, cyclic mod $2$) Brakke flow. When the initial data is compact and smoothly embedded, nonfattening is guaranteed generically, as discussed in \cite{I1}, and also importantly for us is known to hold for \textit{all} smoothly embedded, compact, rotationally symmetric initial data with graphical profile function. In fact, as we'll discuss below, these only encounter finitely many singularities. 

    \subsection{Rotationally symmetric flows}

To set notation, we consider a (say, compact) rotationally symmetric initial hypersurface $M_0$ given by the revolution of its profile curve $\Gamma_0$ about the $x := x_{n+1}$-axis. We denote the other variable by $r$. Taking $\Gamma_0$ to lie in the $xr$-plane $P$, rotational symmetry is preserved under the flow and the corresponding evolution of $\Gamma_t$ of $\Gamma_0$ in $P$ is given, in terms of a family $\gamma:S^1\times [0,T)\to P$ of parametrization maps, by
    \begin{equation}\label{rotsym_evol}
        \frac{\partial\gamma}{\partial t} = -\left(\kappa+\frac{n-1}{r}\cos\theta\right)\nu,
    \end{equation}
    where here $\kappa(\cdot,t)$ is the geodesic curvature of $\gamma(\cdot,t)$ in the $xr$-plane and $\theta$ is the angle that the normal makes with the perpendicular of the axis of rotation. We therefore see that the MCF of rotationally symmetric surfaces is essentially the curve shortening flow, which is very well understood, with an additional forcing term that decays away from the axis of rotation. Owing to its simplicity, the flow \eqref{rotsym_evol} is particularly well behaved, and we summarize some important results on it that were proved in the important paper \cite{aag} of Angenent, Altschuler, and Giga (with some differences in notation). In the following statements, as in \cite{aag}, we make the additional assumption that $\Gamma_0$ is graphical over the $x$-axis. This quality is also preserved under the flow between singular times. Writing the graph function as $u(x,t)$, $u$ satisfies
    \begin{equation}\label{rotsym_graph_evol}
        \frac{\partial u}{\partial t} = \frac{\partial^2_{x}u}{1 + \left(\partial_x u\right)^2} - \frac{n-1}{u}.
    \end{equation}
    That this quality is preserved (Theorem 4.3(a) in \cite{aag}) is a consequence of a strong version of the Sturmian principle using planes perpendicular to the axis of rotation. By Sturmian principle, we mean the following: 

    \begin{thm}[Theorem 4.1 in \cite{aag}]\label{aag_thm0}
    Given two rotationally symmetric mean curvature flows defined on a common time interval $[0,T)$, the respective profile curves either coincide for all $t \in (0, T)$, or exhibit finitely many intersections for all $t \in (0,T)$. In the second case, the number of intersections is non-increasing in time, and decreases whenever a nontransverse intersection occurs. 
    \end{thm}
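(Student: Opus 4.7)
The plan is to deduce this intersection principle from Angenent's classical zero-counting theorem for linear parabolic equations, applied to the difference of suitable graph parametrizations of the two profile curves.

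First I would localize near an intersection. Fix $t_0 \in (0,T)$ and $p \in \Gamma^1_{t_0} \cap \Gamma^2_{t_0}$, and choose Cartesian coordinates in the $xr$-plane so that both curves are, in a spacetime neighborhood of $(p,t_0)$, graphs $r = f^i(x,t)$ over a common interval (achievable by the implicit function theorem for a generic choice of coordinate axis, even if the tangent lines at $p$ disagree). Substituting into \eqref{rotsym_evol}, each $f^i$ satisfies a smooth quasilinear parabolic PDE, and away from the rotation axis their difference $w := f^1 - f^2$ solves
\begin{equation}
w_t = a(x,t)\, w_{xx} + b(x,t)\, w_x + c(x,t)\, w,
\end{equation}
with smooth coefficients obtained by writing the nonlinearity in integral form along the segment connecting the $2$-jets of $f^1$ and $f^2$. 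Intersections of $\Gamma^1_t$ and $\Gamma^2_t$ near $p$ correspond precisely to zeros of $w(\cdot,t)$.

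Second, I would invoke Angenent's zero-counting theorem. On any compact subinterval of the common domain, $w(\cdot,t)$ has finitely many zeros for $t > t_0$, the number of zeros is non-increasing in $t$, and it strictly decreases whenever $w(\cdot,t)$ has a zero of multiplicity $\geq 2$ (the non-transverse intersections). Moreover, an accumulation of zeros in space at any fixed time, or an infinite-order vanishing at any spacetime point, forces $w \equiv 0$ on an open spacetime set; interior real-analyticity for quasilinear parabolic equations then promotes this to coincidence $\Gamma^1_t \equiv \Gamma^2_t$ on an open subarc, and forward/backward uniqueness for smooth MCF propagates this to the full interval $(0,T)$. This is the source of the dichotomy.

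To globalize, I would cover a spacetime neighborhood of $\bigcup_{t \in [t_0, t_0 + \delta)}(\Gamma^1_t \cup \Gamma^2_t)$ by finitely many coordinate patches carrying common graphical parametrizations and sum the local intersection counts. The main obstacle is the rotation axis $\{r = 0\}$, where the forcing term in \eqref{rotsym_evol} becomes singular and where an intersection could in principle be created or destroyed when a profile curve meets or crosses the axis. Following \cite{aag}, I would exploit the rotational symmetry by reflecting each profile curve through the axis to obtain a smooth curve in the $xr$-plane evolving by an ordinary (non-singular) parabolic equation, to which the interior Sturmian analysis applies directly. The remaining work is the bookkeeping task of verifying that the local intersection counts behave additively across patch boundaries and are not spuriously affected by the reflection procedure, yielding global monotonicity with the desired strict drop at every non-transverse intersection.
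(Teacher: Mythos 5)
This theorem is quoted in the paper as Theorem 4.1 of Altschuler--Angenent--Giga \cite{aag}; the paper itself does not reprove it, so there is no in-paper argument to compare against. Your sketch is a reconstruction of the argument in the cited source.

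The overall strategy --- localize, write both profile curves as graphs, linearize so the difference $w = f^1 - f^2$ satisfies a scalar linear parabolic equation, then apply Angenent's zero-counting theorem and globalize --- is the standard Sturmian route and is sound in outline. The one step that cannot be relegated to ``bookkeeping'' is the rotation axis, and the reason you give for why it is harmless is not quite the right one. Reflecting a profile curve through $\{r=0\}$ does produce a smooth closed plane curve, but this alone does not render \eqref{rotsym_evol} non-singular: the factor $(n-1)/r$ is still present and blows up pointwise as $r\to 0$. What actually regularizes the equation (and hence the coefficients of the linearized equation for $w$, which is what Angenent's theorem needs) is that a profile curve of a \emph{smooth} surface of revolution meets the axis orthogonally, so $\cos\theta$ vanishes to first order in $r$ at the crossing and the quotient $\frac{\cos\theta}{r}$ extends smoothly through the axis. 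Without invoking that orthogonality, the claim that the ``interior Sturmian analysis applies directly'' after reflection is unjustified. Once that is in place, one also needs the further check that an intersection of the two reflected curves at a point \emph{on} the axis is, by the reflection symmetry, automatically a zero of $w$ of even multiplicity (hence non-transverse), so the strict-decrease clause has content there. Finally, for compact rotationally symmetric data the profile curve can be taken to be a single graph $r = u(x,t)$ over one moving interval, and the Sturmian theorem applied once globally; this is simpler than your multi-patch covering and sidesteps the unresolved question of whether local zero counts are additive across patch overlaps.
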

    
     As pointed out in \cite{aag}, the Sturmian principle also holds for noncompact flows so long as their intersection set as a subset of spacetime is compact (which for us will always be the case). The following is another consequence of Sturmian theory for \eqref{rotsym_evol}; this and the next few statements are paraphrasings of parts of Theorems 1.1, 1.2 in \cite{aag}:

    \begin{thm}\label{aag_thm2}
        For compact, rotationally symmetric initial data $M_0$ with graphical profile function $\Gamma_0$ as indicated above, the number of critical points of $\Gamma_t$, in terms of distance to axis of rotation, is non-increasing along the flow. 
    \end{thm}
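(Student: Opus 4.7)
The plan is to realize the critical points of $u(\cdot,t)$ as the spatial zeros of a solution of a linear parabolic equation and apply the Sturmian oscillation principle of Angenent that underlies Theorem \ref{aag_thm0}. Setting $w:=\partial_x u$ and differentiating \eqref{rotsym_graph_evol} in $x$ yields
\begin{equation*}
w_t \;=\; \frac{w_{xx}}{1+w^2} \;-\; \frac{2w\, w_x^2}{(1+w^2)^2} \;+\; \frac{n-1}{u^2}\, w.
\end{equation*}
Although this is nonlinear in $w$, for the purposes of Sturmian theory one freezes the coefficients: treating $a(x,t):=1/(1+w^2)$, $b(x,t):=-2w\,w_x/(1+w^2)^2$ and $c(x,t):=(n-1)/u^2$ as given smooth functions on the spacetime domain, $w$ solves the linear, uniformly parabolic equation $w_t = a\,w_{xx}+b\,w_x+c\,w$ on any compact region where $w$ is bounded and $u>0$.

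For each compact subinterval $[t_1,t_2]$ of the maximal smooth existence interval I would restrict attention to a compact $x$-interval. Because the evolving hypersurface is smooth at each pole, the profile curve meets the axis perpendicularly, so $w\to+\infty$ at the left endpoint and $w\to-\infty$ at the right endpoint of the graph domain, uniformly on $[t_1,t_2]$. One can therefore pick an interval $[a,b]$ so that every critical point of $u(\cdot,t)$ lies in $(a,b)$ for each $t\in[t_1,t_2]$ and $w(a,t)>0$, $w(b,t)<0$ on $[t_1,t_2]$. Applying Angenent's Sturmian theorem to $w$ on $[a,b]\times[t_1,t_2]$ then gives that the number of zeros of $w(\cdot,t)$ in $(a,b)$ is finite for every $t\in(t_1,t_2]$ and non-increasing in $t$ (in fact strictly decreasing whenever $w(\cdot,t)$ has a multiple zero). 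As these zeros are precisely the critical points of $\Gamma_t$ viewed as a graph over the axis, the theorem follows.

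The main technical subtlety is that $w$ blows up at the moving poles and the domain of $u(\cdot,t)$ itself changes with $t$, so a priori one must rule out creation of critical points from the boundary as the flow evolves. This is what the definite-sign condition for $w$ near $x=a,b$ buys us: any newly appearing zero would have to be born in the interior, producing at some earlier time a multiple zero, which Sturm's principle forbids. An alternative route that avoids the boundary analysis would be to apply Theorem \ref{aag_thm0} directly to $\Gamma_t$ and its axial translate $\Gamma^{\varepsilon}_t$ (both rotationally symmetric MCFs defined on the same interval), and send $\varepsilon\to 0$: the zeros in $x$ of $u(x,t)-u(x-\varepsilon,t)$ approximate the critical points of $u(\cdot,t)$, and the monotonicity in $t$ furnished by Theorem \ref{aag_thm0} passes to the limit, giving the result without a separate Sturm computation.
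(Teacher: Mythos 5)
The paper does not supply its own proof of this statement; it is imported verbatim from Altschuler--Angenent--Giga (Theorem~1.2 in \cite{aag}), so there is no ``paper proof'' to compare against. Your primary argument is nonetheless the correct and standard one, and it is essentially the argument used in \cite{aag}: differentiate \eqref{rotsym_graph_evol} to get the equation for $w=\partial_x u$ (your computation of $w_t = \tfrac{w_{xx}}{1+w^2} - \tfrac{2w w_x^2}{(1+w^2)^2} + \tfrac{n-1}{u^2}w$ is right), read it as a linear uniformly parabolic equation with frozen coefficients, and invoke Angenent's Sturmian theorem to conclude that the zeros of $w$ (equivalently, the critical points of $u$) are finite in number and non-increasing, with strict drops at multiple zeros. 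Two technical points deserve a bit more care than you give them. First, the spatial domain of $u(\cdot,t)$ moves with the poles, so ``pick an interval $[a,b]$'' should be a time-dependent choice of compact subinterval uniformly inside the moving domain; this works on any compact interval of smooth times, and \cite{aag} sidestep the blow-up of $u_x$ at the poles by working with a bounded reparametrization of the slope rather than with $u_x$ directly. Second, your alternative via Theorem~\ref{aag_thm0} applied to the axial translate $\Gamma^\varepsilon_t$ is a nice observation and certainly in the spirit of the paper, but the passage $\varepsilon\to 0$ requires justification: the number of zeros of $u(\cdot,t)-u(\cdot-\varepsilon,t)$ only reliably matches the number of critical points of $u(\cdot,t)$ when those critical points are non-degenerate, so one should run the comparison at generic times and handle the remaining times by the strict drop at non-transverse intersections built into Theorem~\ref{aag_thm0}.
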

    
    In the statement above, local minima correspond to ``necks'' and the content of Theorem \ref{aag_thm2} can be interpreted as saying these can pop out (or the local maxima can rush in), but no new necks can form. As is well known, singularities are ubiquitous in the flow; the following concerns these:

    \begin{thm}\label{aag_thm3} 
        With $M_0$ as above, the singularities of $M_t$ will be mean convex and occur exactly along the axis of rotation. The number of singularities (and hence singular times) is bounded from above by the number of critical points of $\Gamma_0$.
    \end{thm}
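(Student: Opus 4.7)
I would work with the graphical representation of the profile, so that the non-negative function $u(x,t)$ on an interval $[a(t),b(t)]$ with $u=0$ at the endpoints governs the flow via \eqref{rotsym_graph_evol}. A singularity of $M_t$ at a point $(x_0,r_0,0,\dots,0)\in\mathbb{R}^{n+1}$ lies on the axis of rotation iff $r_0=0$, i.e., iff $u(x_0,T)=0$.

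Step 1 (singularities lie on the axis): I argue by contradiction. Suppose there exist $c_0>0$ and $x_*$ with $u(x,t)\ge c_0$ on $[x_*-\rho,x_*+\rho]\times[T-\rho^2,T)$, where $T$ is the first singular time. The forcing $-(n-1)/u$ in \eqref{rotsym_graph_evol} is then bounded, and an Ecker--Huisken style interior gradient estimate (or a direct maximum-principle computation for a quantity like $|u_x|^2(\rho^2-|x-x_*|^2)^{-2}$) bounds $|u_x|$ uniformly on a slightly smaller cylinder. There \eqref{rotsym_graph_evol} becomes a uniformly parabolic quasilinear PDE with smooth, bounded coefficients; Krylov--Safonov combined with parabolic Schauder bootstrap then yields uniform $C^k$ bounds up to time $T$ near $x_*$, contradicting singularity formation. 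Hence every singular point has $r_0=0$ and lies on the axis.

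Step 2 (counting): By Theorem \ref{aag_thm2} the number of critical points of $u(\cdot,t)$ is non-increasing in $t$, and between singular times the flow is smooth, so this count can only change at singular times. At each singular spacetime point either a neck pinches (an interior local minimum of $u$ reaches zero) or a tip retracts (an endpoint swallows the adjacent local maximum). Either way a critical point of $u$ is irretrievably consumed at the singular point, so the total number of spacetime singularities is at most the number of critical points of $\Gamma_0$.

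Step 3 (mean convexity): Near a singularity on the axis I would rescale parabolically and extract a rotationally symmetric, embedded tangent flow. The classification of self-shrinking tangent flows (Huisken, together with the rigidity of rotationally symmetric embedded shrinkers) leaves only the shrinking sphere, the shrinking cylinder $S^{n-1}\times\mathbb{R}$, and the static hyperplane, each of which is mean convex. Brakke regularity then transfers this mean convexity back, yielding $H\ge 0$ in a spacetime neighborhood of each singular point. The main obstacle is Step 1: one must exploit that the forcing in \eqref{rotsym_graph_evol} is bounded wherever $u$ is bounded below, after which standard parabolic regularity takes over; Steps 2 and 3 follow formally from the Sturmian principle already in place and from standard singularity analysis.
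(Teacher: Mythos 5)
This theorem is cited from Altschuler--Angenent--Giga rather than proved in the paper (note the label ``Part of Theorem 1.2 in \cite{aag}''), so there is no in-paper argument to compare your proposal against; I will assess it on its own terms. Steps 1 and 2 capture the right ideas and are essentially sound: away from the axis the forcing $-(n-1)/u$ in \eqref{rotsym_graph_evol} is bounded, so a localized gradient estimate plus parabolic bootstrapping rules out off-axis singularities, and the Sturmian count of critical points from Theorem \ref{aag_thm2} bounds the number of spacetime singular points since each singularity must consume at least one critical point of the profile graph.

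Step 3 is where the argument has a genuine gap. If ``Huisken's classification'' is meant literally, the reasoning is circular: Huisken's classification of self-shrinkers presupposes $H\ge 0$, which is exactly what you are trying to establish. If instead you intend the convexity-free classification of embedded rotationally symmetric self-shrinkers (Kleene--M{\o}ller), then you must additionally rule out the Angenent torus (this is fine, since it stays a bounded distance from the axis while your singular point lies \emph{on} the axis) and justify that the tangent flow is embedded with multiplicity one -- neither is automatic and both need to be said. A cleaner and more elementary route, closer in spirit to \cite{aag}, is to read mean convexity directly off \eqref{rotsym_evol}: for a graphical profile curve one has $\cos\theta>0$, so the term $\frac{n-1}{r}\cos\theta$ is positive and blows up as $r\to 0$, while $\kappa$ stays controlled at the parabolic blow-up scale once the singularity is known to be Type I (which follows from Sturmian comparison against shrinking spheres and cylinders). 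This yields $H>0$ in a neighborhood of each axis singularity without appealing to any shrinker classification, and avoids the multiplicity question entirely.
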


    The picture here is that typically singularities corresponding to local minima will be modeled (in terms of tangent flows) on shrinking cylinders $S^{n-1} \times \mathbb{R}$, while those corresponding to local maxima will be modeled on shrinking spheres. Of course, after singularities the flow may become disconnected, in particular when neckpinches occur, but when the flow is nonempty we will still refer to the profile curve as $\Gamma_t$. As already suggested by the notation above, the last statement from \cite{aag} we give says essentially that the weak flows through such singularities are well defined:

    \begin{thm}\label{aag_thm1}
        For compact, rotationally symmetric initial data $M_0$ with graphical profile function $\Gamma_0$, the level set flow $M_t$ starting from $M_0$ is non-fattening and remains rotationally symmetric. The flow will be smooth away form the axis of rotation. 
    \end{thm}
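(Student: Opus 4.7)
My plan is to establish rotational symmetry by a direct equivariance argument, and then derive non-fattening by sandwiching the level set flow between a foliating family of rotationally symmetric weak flows, keeping them ordered via the Sturmian principle of Theorem \ref{aag_thm0}.

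For rotational symmetry, fix any rotation $R \in O(n)$ of the coordinates orthogonal to the $x$-axis. The image family $\{R(M_t)\}_{t \geq 0}$ is again a weak set flow, since the avoidance condition in Definition \ref{original_defn} is invariant under isometries of $\mathbb{R}^{n+1}$, and it starts from $R(M_0) = M_0$. By the maximality characterization of the level set flow, $R(M_t) \subseteq M_t$; applying the same to $R^{-1}$ yields $R(M_t) = M_t$ for every $t$.

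For non-fattening, I would choose a smooth one-parameter family $\{M_0^\epsilon\}_{\epsilon \in (-\delta, \delta)}$ of compact, rotationally symmetric, graphical hypersurfaces with $M_0^0 = M_0$, arranged so that the family foliates a tubular neighborhood of $M_0$, with $M_0^\epsilon$ strictly inside $M_0^{\epsilon'}$ whenever $\epsilon < \epsilon'$. By Theorem \ref{aag_thm3}, each smooth MCF $\{M_t^\epsilon\}$ meets only finitely many singular times, all occurring on the axis and all mean convex, i.e., either a neckpinch or an extinction into a point. At each singular time one continues the flow canonically, dropping a component that has shrunk to a point or separating a neckpinch into two graphical rotationally symmetric pieces. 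Theorem \ref{aag_thm0} then guarantees that the profile curves of $M_t^\epsilon$ and $M_t^{\epsilon'}$, being disjoint at time $0$, cannot cross at any later time. Hence the level set flow $\{M_t\}$ of $M_0$ is sandwiched between $\{M_t^{-\epsilon}\}$ and $\{M_t^{\epsilon}\}$ for every small $\epsilon > 0$. Letting $\epsilon \to 0$ and using continuous dependence of the smooth flow on initial data away from singular times, the two sandwich bounds collapse to a common $n$-dimensional rotationally symmetric set, which must equal $M_t$; in particular $M_t$ has empty interior.

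The main obstacle is to control the family $\{M_t^\epsilon\}$ across singular times: one must verify that the canonical continuation of each $M_t^\epsilon$ past a neckpinch or extinction is a well-defined weak set flow that remains rotationally symmetric and graphical, and that the foliation ordering persists through these events. The mean convexity of singularities (Theorem \ref{aag_thm3}) is the essential ingredient here, as it rigidifies the local singularity structure enough to rule out ambiguous continuations and any instantaneous fattening at a singular time, while the finiteness statement of Theorem \ref{aag_thm2} ensures the argument only needs to handle finitely many such events for each $\epsilon$.
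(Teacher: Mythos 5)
This statement is not proved in the paper; it is Theorem~1.1 of \cite{aag}, quoted as a background result, so there is no ``paper's own proof'' to compare against. Your rotational symmetry argument is correct and standard: the avoidance condition in Definition~\ref{original_defn} is isometry-invariant, the level set flow is characterized by maximality, and the equivariance under $R$ and $R^{-1}$ forces $R(M_t) = M_t$. Your non-fattening strategy is also the right idea in spirit --- Altschuler--Angenent--Giga do prove non-fattening by squeezing the level set flow between ordered rotationally symmetric competitors whose profile curves are controlled by the intersection-counting (Sturmian) machinery.

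However, the step you isolate as ``the main obstacle'' is not a loose end but the entire content of the theorem, and your proposed resolution does not close it. First, for the sandwich $M_t^{-\epsilon}\subset M_t \subset M_t^{\epsilon}$ to make sense after a neckpinch, you must show the naive continuation of $M_t^{\pm\epsilon}$ (drop extinct components, split the neck) is itself a weak set flow; this is nontrivial and is essentially equivalent to non-fattening for that initial datum, so stated this way the argument risks circularity. Second, the claim that ``continuous dependence of the smooth flow on initial data away from singular times'' makes the sandwich collapse as $\epsilon\to 0$ is too weak: the singular times $T(\epsilon)$ of the family $\{M_t^\epsilon\}$ vary with $\epsilon$ and may accumulate at singular times of $M_t$, so for $t$ near a singularity of $M_t$ one cannot appeal to smooth continuous dependence at all. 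What is actually required is a uniform statement --- continuity in $\epsilon$ of the singular times, singular locations, and of the post-singular profile curves --- and this is precisely what the intersection-counting lemmas and the ``attracting axis'' analysis in \cite{aag} are designed to deliver. Invoking mean convexity of the singularities as ``rigidifying the local structure'' names the right mechanism but replaces the bulk of the proof with a plausibility argument. To make your outline rigorous you would need, at minimum, to prove that the set of times at which the sandwich fails to collapse has empty interior, together with a separate argument that fattening would persist on a time interval, so that the dense set of squeezed times suffices.
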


    The following result gives good gradient estimates starting from bounded initial data which will be useful in the sequel. However we see that the bounds below depend on the height of the profile curve over the axis of rotation, which in our construction of old-but-not-ancient solutions we will take arbitrarily large:

\begin{thm}[Theorem 4.3b in \cite{aag}]\label{aag_grad} Let $M_t$, $0 \leq t < T$, be a family of smooth hypersurfaces evolving by their mean curvature and assume that it is given by a smooth solution of the horizontal graph equation $r = u(x,t)$, with $0 < t < T$ and $a(t) < x < b(t)$ for certain smooth functions $a, b: (0, T) \to \R$. Then there is a function $\sigma: \R_+ \times \R_+ \to \R$ such that 
\begin{equation}
|u_x(x,t)| \leq \sigma(t, u(x,t))
\end{equation}
holds for all $0 < t < T$, $a(t) < x < b(t)$. The function $\sigma$ only depends on $\sup u(x,0)$.
\end{thm}

              The  proof is an application of the Sturmian principle using a cleverly designed comparison flow/profile curve. As pointed out in \cite{aag} without loss of generality $\sigma$ is monotonically nonincreasing in $u$ and $t$. Note by the Ecker--Huisken estimates \cite{EH} the above implies analogous higher order bounds for the rotated surface away from the axis of rotation, although they necessarily degenerate as one approaches it. On the other hand, as long as we have uniform curvature bounds on the initial data, as in section \ref{old_not_ancient_construction} (a step where these estimates are prominently applied), we will have uniform curvature bounds for small times by the doubling time estimate, before the estimates above strongly apply. 
       
       In section \ref{old_not_ancient_construction}, we will also wish to potentially apply the estimates above past singular times, which we now discuss. Now, even through singular times away from the origin the flow is smooth and satisfies the classical mean curvature flow, and the profile curve of $M_t$, $\Gamma_t$ (in our notation), will continue to be locally represented by a graph of a function, at smooth times one for each of its connected components. One can see the number of intersection points of $\Gamma_t$ with the comparison profile curve used in the proof will not increase through a singular time. From this, again keeping in mind the argument is via the Sturmian principle, one can see that the estimates also apply through singular times for each of the connected components of $\Gamma_t$, maintaining the monotonicity of $\sigma$. Alternately, one can use local estimates \cite{chen2007uniqueness, EH} away from the axis of rotation about a singular time along with "restarting" the estimates above to get control on the gradient away from the axis of rotation -- some care is needed to ensure such estimates one derives only depend on $C^0$ bounds but this is possible at least in the specific setting we will apply them in, where there is at most one singular time (on the time domain we restrict ourselves to) and all initial data has uniformly bounded curvature.

     In the sequel we will be applying the Brakke compactness theorem to smooth rotationally symmetric flows, and it will be helpful to go between them and their limit. The following lemma relates this convergence to Hausdorff convergence; this is similar to a well known general fact for Hausdorff convergence of spacetime supports: 
    
    \begin{lem}\label{Hau_conv} Suppose that $U \subset \mathbb{R}^2$ is a bounded open set whose closure is disjoint from the axis of rotation. Let $M^i_t$ be a sequence of smooth rotationally symmetric mean curvature flows which converge to a limiting Brakke flow $B_t$ (as Radon measures for all times and varifolds for almost a.e. time, as in the Brakke compactness theorem). Moreover, assume the profile curves of $M^i_t$, denoted by $\gamma^i_t$, are graphical over the axis of rotation. Then the $M^i_t$ converge in the Hausdorff topology to $\supp(B_t)$ in $U'$, the set obtained by rotating $U$ around the axis of rotation.
\end{lem}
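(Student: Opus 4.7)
The plan is to exploit rotational symmetry to reduce the question to the planar profile curves $\gamma^i_t$ in the $xr$-plane, use Brakke's mass bound to control their length uniformly in $\overline{U}$, and then extract a Hausdorff convergent subsequence via an Arzela--Ascoli argument on arc-length parameterizations. The Hausdorff limit will then be identified with the support of $\mu_{B_t}$ using standard Portmanteau inequalities in both directions.

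I would begin by noting that since each $\mu_{M^i_t}$ is rotation-invariant, so is the weak-$*$ limit $\mu_{B_t}$, and hence $\supp(\mu_{B_t})$ is a rotationally symmetric closed set. Since $\overline{U}$ is bounded and disjoint from the axis, there is $r_0 > 0$ with $r \geq r_0$ on $\overline{U}$, and the orbit map $(x,r,\omega) \mapsto (x,r\omega)$ is bi-Lipschitz on $\overline{U} \times S^{n-1}$. From the formula $d\mu_{M^i_t} = r^{n-1}\omega_{n-1}\,ds$, where $ds$ is profile arc length, one obtains
\begin{equation*}
\mathrm{length}(\gamma^i_t \cap \overline{U}) \leq \frac{1}{\omega_{n-1} r_0^{n-1}}\,\mu_{M^i_t}(U'),
\end{equation*}
which together with the uniform mass bound coming from Brakke's compactness theorem is uniform in $i$. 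Arc-length parameterizing each connected component of $\gamma^i_t \cap \overline{U}$ yields a uniformly bounded family of $1$-Lipschitz curves with a uniformly controlled number of components. Fixing a countable dense subset $T_0 \subset I$ and applying Arzela--Ascoli with a diagonal argument, I would extract a subsequence along which, for every $t \in T_0$, $\gamma^i_t \cap \overline{U}$ converges in Hausdorff distance to some compact set $K_t \subset \overline{U}$.

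The remaining step is to verify that the rotation of $K_t$ coincides with $\supp(\mu_{B_t})$ inside the orbit region. One inclusion is immediate from Portmanteau on open sets: for $p \in \supp(\mu_{B_t}) \cap U'$, every neighborhood has positive $\mu_{B_t}$-mass, hence positive $\mu_{M^i_t}$-mass for large $i$, producing approximating points $p_i \in M^i_t$ accumulating at $p$, so the profile of $p$ lies in $K_t$. For the reverse inclusion, given $q = \lim q_i \in K_t$ with $q$ interior to $\overline{U}$ and $q_i \in \gamma^i_t$, the $1$-Lipschitz arc-length parameterization implies the arc of length $2\delta$ centered at $q_i$ lies in the disk $B_\delta(q_i)$ of the $xr$-plane; rotating this arc produces a subset of $M^i_t$ of area at least $2\delta\omega_{n-1}r_0^{n-1}$ contained in the solid of revolution $V_\delta(q_i)$. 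Applying Portmanteau on the compact set $\overline{V_{2\delta}(q)}$ (which eventually contains $\overline{V_\delta(q_i)}$) gives
\begin{equation*}
\mu_{B_t}(\overline{V_{2\delta}(q)}) \geq \limsup_i \mu_{M^i_t}(\overline{V_\delta(q_i)}) \geq 2\delta\omega_{n-1}r_0^{n-1} > 0
\end{equation*}
for every small $\delta$, so the orbit of $q$ meets $\supp(\mu_{B_t})$, and by rotational invariance of the support the full orbit of $q$ lies in $\supp(\mu_{B_t})$.

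The main technical obstacle is that we have no $C^1$ or curvature bounds on the graph functions $u^i$, so the standard ``smooth convergence implies Hausdorff convergence'' route is unavailable; the key observation is that graphicality combined with being uniformly away from the axis yields a one-sided area-to-length comparison that effectively substitutes for regularity. A secondary issue is that the argument above only produces Hausdorff convergence on a countable dense set of times, but convergence on a dense (in fact a.e.) set of times should be sufficient for the applications in the paper; if needed, extending to every $t \in I$ would require an additional diagonalization together with weak continuity properties of Brakke support.
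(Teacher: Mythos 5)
Your proposal is correct in spirit and rests on the same two pillars as the paper's proof: (i) graphicality together with a uniform distance $r_0>0$ from the axis furnishes a local mass lower bound around every point of $M^i_t$ in the orbit region, and (ii) weak-$*$ convergence of the Radon measures in both the upper (closed-set) and lower (open-set) Portmanteau directions identifies the Hausdorff limit with $\supp\mu_{B_t}\cap\overline{U'}$. The paper phrases the mass lower bound slightly differently (it works with the length of the profile curve entering and exiting a ball $B(x,\epsilon/2)$ around a point $x\in\gamma^i_t$, then multiplies by the orbit factor), but the substance is the same as your area-to-length comparison.

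The one step that does not hold up is the assertion that arc-length parameterizing the components of $\gamma^i_t\cap\overline{U}$ produces a family with a ``uniformly controlled number of components.'' Graphicality gives no a priori bound on how many times the graph of $u^i_t$ can cross $\partial U$, so the number of components of $\gamma^i_t\cap\overline{U}$ need not be bounded in $i$, and the Arzel\`a--Ascoli step as you phrase it is not justified. This is easily repaired, and indeed the repair is exactly what the paper does: replace the arc-length/Arzel\`a--Ascoli extraction with Blaschke selection, i.e., compactness of the space of compact subsets of a fixed compact set in the Hausdorff metric, which requires no bound on components or regularity whatsoever. The total-length bound you derived is then unnecessary for the compactness step (though the local length lower bound remains essential for identifying the limit with the support). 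With that substitution your argument goes through and coincides with the paper's.
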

\begin{proof}
Since $\overline{U}$ is disjoint from the axis of rotation $R$ and is compact, we see that trivially there exists a uniform choice of $\epsilon > 0$ so that for each $x \in \overline{U}$ $B(x, \epsilon)$ is disjoint from $R$. Considering a fixed flow and time let $x \in \gamma^i_t$. Since the flows $M^i_t$ are closed with graphical profile curves $\gamma^i_t$, we observe that we can bound the mass of $\gamma^i_{t} \cap B(x, \epsilon/2)$ from below by $\epsilon/2$. This follows from considering that the profile curves $\gamma^i_t$ must enter and exit the ball at least once and they pass through the center $x$. Since $\partial B(x, \epsilon/2)$ is at least $\epsilon/2$ distance from the origin, we can bound the mass of $M^i_t$ from below by $C(n)\epsilon^n$ in the ball $B(\overline{x}, \epsilon/2)$ where $C(n) > 0$ is a constant depending only on the dimension $n$. Here $\overline{x}$ is any point corresponding to $x$ under the rotation of the profile curve. Similar reasoning applies for any $0< \epsilon' < \epsilon$.

We claim this mass lower bound implies that the flows $M^i_t \cap \overline{U'}$ must converge for each $t$ in the Hausdorff topology to $\supp(B_t)$ in $\overline{U'}$. In the following, recall that the set of compact subsets of a compact set are compact in the Hausdorff topology (i.e. with respect to Hausdorff convergence). We show in $\overline{U}$ that the Hausdorff limit $M^\infty_t$ of $M^i_t$ exists from which it quickly follows by similar reasoning that $M^\infty_t$ must coincide with $\supp(B_t)$. If the Hausdorff limit did not exist, then from the preceding remark there exists two different subsequences $i_j, i_k$ for which $M^{i_j}_t \cap \overline{U'}, M^{i_k}_t \cap \overline{U'}$ converge to sets $H^1 \neq H^2 \subset \overline{U'}$. As detailed before, we have a uniform lower bound for the mass in balls, so each point in $H^1$ and $H^2$ must lie in $\supp(B_t)$, giving that $H^1, H^2 \subset \supp(B_t) \cap \overline{U'}$. Note that this relies on the mass lower bound being uniform in $i$ and $x \in U$, and likewise for the corresponding bound for $0 < \epsilon' < \epsilon$. On the other hand because $M^i_t$ converges to $B_t$ for each $t$ as Radon measures, we see that $\supp(B_t) \cap \overline{U'} \subset H_1$ and $\supp(B_t) \cap \overline{U'} \subset H_2$. From the previous step this yields a contradiction to $H_1 \neq H_2$, and thus proves our claim. \end{proof}

  We will also occasionally need to refer to ``caps'' of the evolving profile curves. These of course correspond to where the profile curve meets the axis of rotation as in Figure \ref{fig:pancake stack}, but to be more precise they are defined as follows.

    \begin{defn}
        The \textbf{upper and lower caps} of a curve $\Gamma_t$ evolving by \eqref{rotsym_graph_evol} are the components of
        \begin{equation*}
            C_\alpha := \left\{\left(x,r\right) : 0 \leq r \leq \alpha, x = v_i\left(r, t\right),~~i=1,2\right\},
        \end{equation*}
        where $\alpha > 0$ and $v_1,v_2$ are, respectively, the inverses of the graph function $u$ on the intervals $\left(a_1, a_1 + \delta\right]$ and $\left[a_2 - \delta, a_2\right)$. Here $a_1,a_2$ are the $x$-coordinates of the end points of the curve and $0 < \delta $ is a sufficiently small constant.
    \end{defn}

\section{Construction of the Ancient Solution}\label{construction}
    The construction begins by first finding a sequence $M^i_t$ of ``old-but-not-ancient'' rotationally symmetric solutions defined on time intervals $[-T_i, 0]$ for a sequence of times $T_i \to \infty$ as $i \rightarrow \infty$. Perhaps the most straightforward idea, and one which has  found success in the past (for instance, in \cite{MP, blm}), is to take a convergent subsequence of these flows \'a la Arzel\'a--Ascoli to obtain an ancient flow. To this end one would check the following:

    \begin{enumerate}
        \item Along the flows $M^i_t$ we have uniform curvature bounds, at least on domains $B(0,R) \times [-T_i, 0]$.
        \item The flows $M^i_t$ do not degenerate as $i \to \infty$ in the sense that we do not obtain in the limit an ancient flow which is trivial or has been constructed in other works. 
    \end{enumerate}

    In our setting we do in fact have helpful curvature bounds we make use of several times, by way of Theorem \ref{aag_grad} and the Ecker-Huisken estimates \cite{EH}. They nearly fit the requirements for (1) but aside from necessarily degenerating about the axis of rotation they depend on apriori $C^0$ bounds which we don't have, at least immediately, to apply as in the scheme above  -- see the proof of Lemma \ref{smooth_seq} below. However, in our setting we will at least be able to quickly apply Brakke's compactness theorem. So, instead of following the scheme above, we first construct appropriate old-but-not-ancient flows $M^i_t$ via a continuity argument and take a converging subsequence of these as Brakke flows to get a ``weak'' ancient solution $B_t$. Afterwards, we show that the weak solution is bounded for every fixed time and smooth, which hinges on using the tightly constrained nature of rotationally symmetric flows. 
    
    Indeed, as one finds below our arguments for these properties are mostly in terms of the approximating flows, so a natural question might be why actually taking a limit as Brakke flows is useful. We do not mean to claim from the above that taking such a limit is unavoidable, but having the ancient Brakke flow $B_t$ at hand does seem genuinely useful at least at some sort of organizational level. This is perhaps most clear in reducing to the ``cusps'' case in the proof of Lemma \ref{silky_smooth} below. There, constraints on the ancient Brakke flow help shed light more  precisely on how the approximating flows can degenerate in a particular case.

    \subsection{Construction of smooth ``old--but--not--ancient'' approximating flows and the ancient Brakke flow $B_t$.}\label{old_not_ancient_construction}
    We now turn to discussing the construction of the initial data for the old-but-not-ancient flows. Perhaps of some interest is that apparently we need only specify relatively coarse information about them. 
    
    Denote by $h(t)$ and $\ell(t)$ the ``thickness'' (total horizontal displacement along the axis of rotation) and ``radius'' (maximum distance from the axis of rotation) of the time $t$ slice of the standard ancient pancake solution $\{P_t\}_{t\in(-\infty,0)}$. For more precise definitions, we refer the reader to \cite{blt_pancakes}. Then, by Theorem 1.1 of \cite{blt_pancakes} we have:
    \[
    h(t)=\pi-o(1)\;\;\text{and}\;\;\ell(t)=-t+(n-1)\log(-t)+C_n+o(1)\;\;\text{as}\;\; t\to-\infty\,.
    \]

    Denote by $\gamma(\cdot,t)$ the profile curve of $P_t$, and recall we denote the axis of rotation by the $x$-axis and the other variable by $r$. We then construct the profile curves used to find our old-but-not-ancient solutions as follows. First, for some choice of $s <0$ take two copies $\gamma^u(\cdot,s), \gamma^\ell(\cdot,s)$ (for upper and lower) of $\gamma(\cdot,s)$ and arrange them so that they correspond to two parallel pancakes of distance 2 apart. That is, the lower intersection point of $\gamma^u(\cdot,s) \cap \{(x,r) \mid r = 0 \}$ is $x = 1$ and the upper intersection point of $\gamma^\ell(\cdot,s)\cap \{(x,r) \mid r = 0 \}$ is $x = -1$.  Then, for each $\rho\in [\rho_0(s),\ell(s)]$, we may join these two pancakes together with a ``neck''. This is obtained by removing the region $\{(x,r):-1-\frac{h(s)}{2}<x<1+\frac{h(s)}{2}, 0\le r<\rho\}$ and attaching the circular arc which meets the boundary points tangentially, with $\rho_0(s)$ being the value for which the circular arc touches the axis of rotation at the origin. We may mollify about the transition region to ensure the resulting profile curve (and corresponding hypersurface) is smooth (for $\rho > \rho_0(s)$) and graphical over the rotation axis.

    The distance $m^0$ from the axis to the neck will be monotone increasing with respect to $\rho$ and corresponds to a (potentially weak) local minimum of the distance to the axis of rotation. This circular arc is graphical over the axis of rotation, and we denote it by $f_{m^0}$, where $m^0$ is the local minimum just described. Thus, in this way, we may produce a profile curve $\Gamma^{m^0}_{s}$ for each $m^0\in [\delta,\ell(s)]$ where we take $\delta \ll 1$, potentially adjusted below. We highlight some especially important properties for the sequel:

    \begin{enumerate}
        \item $\Gamma^{m^0}_s$ is graphical over the $x$-axis and has two local maxima of values/height $M_1, M_2$ corresponding to those of the pancakes, and one local minimum $m^0$ corresponding to the minimum of $f_{m^0}$ which we call the \textbf{initial neck parameter}. We can (and will) take the configuration to be reflection symmetric.
        \item The compact domains bounded by $\gamma^u_s, \gamma^\ell_s$ and the $x$-axis lie in the domain bounded by $\Gamma^{m^0}_s$. 
        \item The family $\Gamma^{m^0}_s$ is continuous in $m^0$ and, in the same sense as the item above, if ${m^0_1} < {m^0_2}$, then $\Gamma^{m^0_1}_s \subset \Gamma^{m^0_2}_s$. 
    \end{enumerate}

    Refer to Figure \ref{profile_curve_fig} for a illustration of $\Gamma^{m^0}_s$. The idea is to now construct a sequence of old-but-not-ancient solutions by varying the parameter $m^0$ appropriately along a sequence $s_i \to -\infty$. 

   Since rotational (as well as reflection) symmetry is preserved along the flow, we often consider the behavior of the profile curve $\Gamma_s^{m^0}$ along the flow \eqref{rotsym_evol}. We denote the hypersurface obtained from $\Gamma_s^{m^0}$ by $\Sigma_s^{m^0}$ and its corresponding MCF by $(\Sigma_s^{m^0})_t$ and analogously denote the profile curve of $(\Sigma_s^{m^0})_t$ by $\left(\Gamma^{m^0}_s\right)_t$. Of course, the flow of such a hypersurface of rotation may develop singularities, and Theorem \ref{aag_thm3} tells us that these will occur precisely when a critical point of $(\Gamma^{m^0}_s)_t$ meets the $x$-axis. In particular, past the first singular time we may only discuss the flow in terms of weak solutions. However, as discussed in Section \ref{prelims} (in particular, Theorem \ref{aag_thm1}), the level set flow of rotationally symmetric compact initial data is non-fattening and remains rotationally symmetric, implying there is a unique ``well behaved'' weak flow which comes from $\Sigma_s^{m^0}$. This tells us that even through singularities it makes sense to consider the profile curves of the weak flow, in the sense that they are well defined sets of locally finite $\mathcal{H}^1$ measure whose orbit under rotation gives $(\Sigma_s^{m^0})_t$. We shall denote the weak flow of $\Sigma_s^{m^0}$ by $\{(\Sigma_s^{m^0})_t\}_{t\in[0,\omega_{m}(s))}$, and the weak flow of its profile curves by $\{(\Gamma_s^{m^0})_t\}_{t\in[0,\omega_{m^0}(s))}$. Here $\omega_{m^0}(s)$ denotes the time the weak flow goes extinct.

   Since the pancakes $\gamma^u(\cdot, s), \gamma^\ell(\cdot, s)$ used in the construction above themselves serve as inner barriers which last for a time period of length at least $-s$, by the avoidance principle we obtain the following lower bound on the time of extinction which serves as a quick plausibility check (these flows should be increasingly long lived). Likewise we obtain the upper bound by using a large cylinder of radius $\ell(s)$ as an outer barrier. 
    \begin{lem}\label{existence_time_estim}The time of existence of $\Sigma_s^{m^0}$, $\omega_{m^0}(s)$, satisfies $-s \leq \omega_{m^0}(s) \leq \ell(s)^2$. 
    \end{lem}     
   
 By Theorem \ref{aag_thm2}, the number of critical points of the ``weak'' profile curve cannot increase along the flow. In particular, the local minimum $m(t)$ of the graph of $\left(\Gamma^{m^0}_s\right)_t$ is well-defined, at least until the time this local minimum disappears. 
    By reflection symmetry for our purposes we may discuss $m(t)$ through this time by just defining it as the value of the (at least locally defined) profile function over the $x$-axis at $x = 0$, even if the flow is convex, where we take it to be zero if a neckpinch occurred or is occurring. We also denote the maximum height of $\left(\Gamma^{m^0}_s\right)_t$ by $M(t)$. Since these notions are used often, we highlight them in a definition: 
    \begin{defn}\label{minmax_defn} Where $\left(\Sigma_s^{m^0}\right)_t$ is a reflection and rotationally symmetric flow with locally graphical profile curve $\left(\Gamma^{m^0}_s\right)_t$ as described above, we write:
    \begin{enumerate}
    \item $m(t) = r(\{ x =0 \} \cap \left(\Gamma^{m^0}_s\right)_t)$, equal to zero if this set is empty. 
    \item $M(t) = \sup\limits_{x_0} r(\{ x =x_0 \} \cap \left(\Gamma^{m^0}_s\right)_t)$
    \end{enumerate}
    where $r$ is the height function over the axis of rotation. 
    \end{defn}

    \begin{figure}[h]\centering
    \includegraphics[width=1\textwidth]{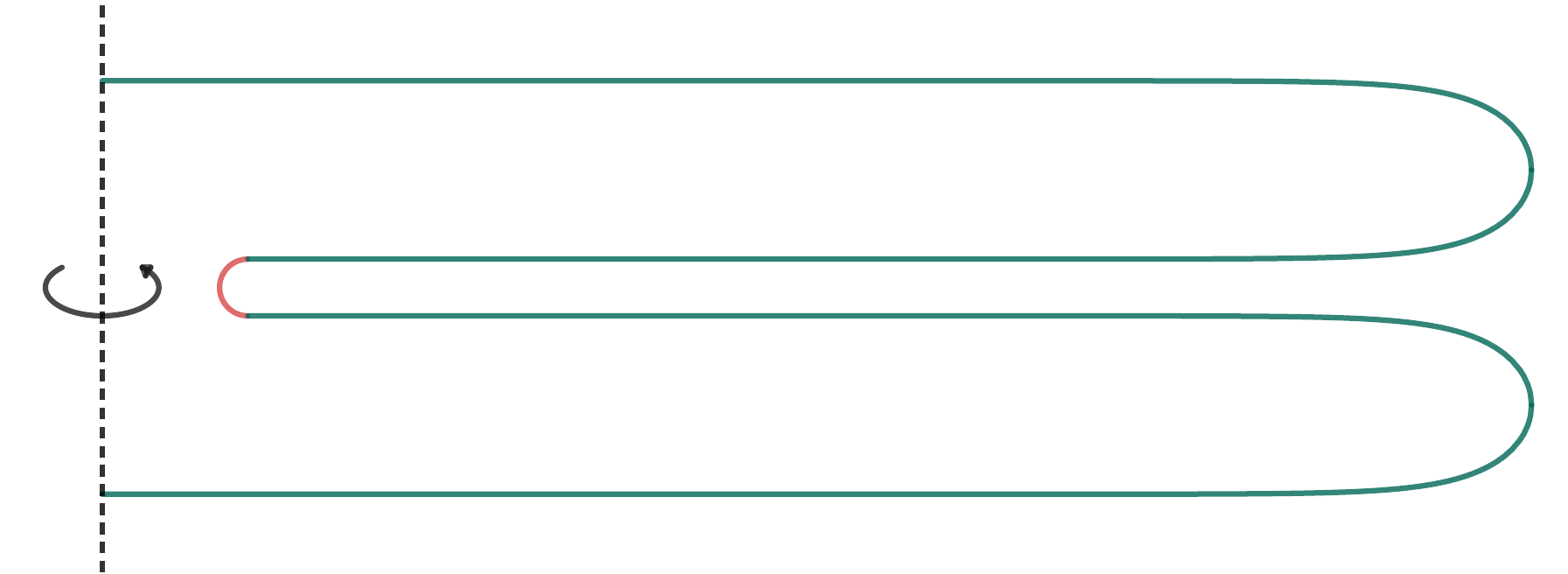}
    \caption{A schematic of the profile curve $\Gamma^{m^0}_s$; note that in actuality the profile curves are not exactly vertical for a given choice of $s$.}
    \label{profile_curve_fig}
    \end{figure}

Now we turn to actually obtaining a non-trivial ancient solution, worthy of the moniker ``stacked pancakes.'' This is by a continuity argument and towards that end we need a number of lemmas to show various objects/quantities are continuous in the initial neck parameter $m^0$, starting with the following: 

    \begin{lem}\label{continuous_dependence}
        For fixed $s < 0$, 
        the flows $\{(\Sigma^{m^0}_s)_t\}_{t\in[0,\omega_m(s))}$ are continuous in $m^0$ as Brakke flows, where $m^0 \in [\delta, \ell(s)]$. For any fixed time their supports are continuous in Hausdorff distance away from the rotation axis. 
    \end{lem}

    \begin{proof}
         As discussed above, Theorem $1.1$ in \cite{aag} tells us that for each $m^0$ the level set flows of $\Sigma_s^{m^0}$ are non-fattening and in fact by Theorem \ref{aag_thm3} the number of singular times is finite. In particular such flows are non--discrepant by Theorem 4.37 in \cite{LP} (see Section 4.3 in \cite{LP} for the definition of non--discrepancy). This implies that there is a unique unit regular, cyclic mod $2$ Brakke flow emanating from $\Sigma_s^{m^0}$ for fixed $s$ and $m^0$, and the support of this flow is exactly the non-fattening level set flow: in this case the outermost flows of $\Sigma_s^{m^0}$ correspond precisely to their level set flows, and as shown in Appendix B of \cite{HW}, these are the support of a unit regular and cyclic mod 2 Brakke flow. The uniqueness of this flow follows from Corollary G.5 in \cite{CCMS}.      
    
        It is easy to check that we have a uniform entropy bound at $t = 0$ by the design of the initial data and hence on  $\lambda\left(\left(\Sigma^{m^0_i}_{s}\right)_t\right)$ for $t > 0$ by monotonicity of the entropy. Thus, given any sequence $m^0_i \to m^0_\infty$, we may employ Brakke's compactness theorem to ensure that a subsequence of the Brakke flows $\left(\Sigma^{m^0_i}_{s}\right)_t$ converges to a Brakke flow of $\Sigma^{m^0_\infty}_s$. This limit Brakke flow will be unit regular and cyclic mod 2, so by uniqueness it must agree with $\left(\Sigma^{m^0_\infty}_s \right)_t$, thought of as a Brakke flow. The claim on continuity as Brakke flows then follows by a compactness contradiction argument, with the Hausdorff convergence following easily from Lemma \ref{Hau_conv}.
    \end{proof}

 In the sequel an important quantity for us will be the ``stopping time" in setting the time domain of the old-but-not-ancient flows. In the following, the dimensional constant $c_n > 1$ is a factor which we'll specify below, in Lemma \ref{silky_smooth}:
 \begin{defn} For a given $m^0 \in [\delta, \ell(s)]$ we write $T_{m^0} (= T_{m^0}(s))$ for the first time that $M(T_{m^0}) = c_n \cdot 10^{10}$. If no such time exists, we set $T_{m^0} = \infty$.
 \end{defn}
 Note that if $\ell(s)$ is sufficiently large such a time exists (and in fact is the only such time) using cylinderical barriers. We next argue that $T_{m^0}$ is continuous in $m_0$, where in the lemmas below $s$ will be fixed but large enough that $T_{m^0}$ is well defined, say $\ell(s) > 2 \cdot c_n10^{10}$:
 \begin{lem}\label{stop_cont} For fixed $|s| \gg 0$, consider the flows $(\Sigma^{m^0}_s)_t$ from above. Then $T_{m^0}$ is continuous in the initial neck parameter $m^0$ for $m^0 \in [\delta, \ell(s)]$.
\end{lem}
 
\begin{proof} By the rotational symmetry of $(\Sigma^{m^0}_s)_t$, it suffices to prove the result for $\left(\Gamma^{m^0}_s\right)_t$, the flow of the profile curves. Considering a sequence of initial neck parameters $m^0_i \to m^0_\infty$, we wish to show that $T_{m_i^0} \to T_{m^0_\infty}$. To do so we begin with a couple of observations. We recall that Lemma \ref{continuous_dependence} tells us $\left(\Gamma_s^{m^0_i}\right)_{T_{m^0_\infty}} \to  \left(\Gamma_s^{m^0_\infty}\right)_{T_{m^0_\infty}}$ as measures and in fact their supports must converge in the Hausdorff distance away from the axis of rotation. This implies that $M_i(T_{m_\infty^0})$ must converge to $M_\infty(T_{m_\infty^0}) = c_n10^{10}$ as $i \to \infty$, where $M_i(t)$ denotes the maximum distance from the axis of rotation attained on $\left(\Gamma_s^{m^0_i}\right)_t$. More generally, we see that $M_i(t) \to M_\infty(t)$ for arbitrary times $t$. Next, by the curvature bounds resulting from Theorem \ref{aag_grad} and Ecker--Huisken estimates (see also the surrounding discussion), we see that, for all $i$, $M_i(t)$ is a Lipschitz function in $t$ as long as the flow is not extinct. With this in mind, by considering large cylindrical barriers (depending on $s$), we have at (the a.e.) differentiable times there is a uniform positive constant $C_1$ for which $M'_i(t) < -C_1$ by Hamilton's trick (Lemma 2.1.3 in \cite{mant}).

 With these ingredients in hand, suppose for the sake of contradiction that $T_{m^0_i} \not\to T_{m^0_\infty}$. Extracting a converging subsequence (as by a barrier argument this sequence of times is certainly bounded), we may suppose that $T_{m^0_i} \to T' \neq T_{m^0_\infty}$. Since $M_i(T_{m_\infty^0}) \to M_\infty(T_{m_\infty^0}) = c_n10^{10}$ and $M_i(T_{m^0_i}) = c_n10^{10}$, we trivially have that 
 \begin{equation*}
    \lim\limits_{i \to \infty} |M_i(T_{m_i^0}) - M_i(T_{m_\infty^0})| \to 0,     
\end{equation*}
 which we claim implies $M_\infty(T') = M_\infty(T_{m_\infty^0})$. To justify this, first write
 \begin{equation} 
 M_i(T_{m_i^0}) - M_\infty(T') = (M_i(T_{m_i^0})  - M_i(T')) + (M_i(T')  - M_\infty(T')).
 \end{equation}
 Note that $M_i(T')  - M_\infty(T')$ tends to zero by Lemma \ref{continuous_dependence}, while the first term $ M_\infty(T_{m_i^0})  - M_\infty(T')$ tends to zero due to $T_{m_i^0} \to T'$ and the previously mentioned gradient bounds (recalling $M_i(T_{m_i^0}) = c_n10^{10}$ for all $i$). In particular, if this did not happen, then there would be some $0 <\epsilon \ll 1$ so that $|M_i(T_{m_i^0})  - M_i(T')| > \epsilon$ for all $i \gg 1$, after passing to a subsequence. Because the $M_i(t)$ are continuous over the interval $[T_{m^0_i}, T_{m^0_\infty}]$ (or $[T_{m^0_\infty}, T_{m^0_i}]$ if $T_{m^0_\infty} < T_{m^0_i}$) the images of $M_i(t)$ must contain a uniformly positive subinterval, since $\epsilon \ll c_n10^{10}$, of length at least $\epsilon$. Working in these intervals, by the curvature bounds resulting from Theorem \ref{aag_grad} and Ecker--Huisken estimates applied about the maxima, we can see there is some uniform constant $C$ so that $\epsilon < C|T_{m_i^0}  - T'|$ which, since $T_{m_i^0}  \to  T'$, gives a contradiction for $i \gg 1$. Finally, since we saw above that $M_i(T_{m_\infty^0}) \to M_\infty(T_{m_\infty^0})$, we have the claimed equality $M_\infty(T') = M_\infty(T_{m_\infty^0})$.

In particular, if $T' \neq T_{m_\infty^0}$ then $M_\infty(t)$, as it is non-increasing, is constant on some time interval, which can be dealt with by a barrier argument. Specifically, since $M_\infty(T') = M_\infty(T_{m_\infty^0}) = c_n10^{10}$ if $T' > T_{m^0_\infty}$, then the bound $M'_i(t) < -C_1$ from the cylindrical barrier implies that $M_i(T_{m_\infty^0}) < c_n10^{10} -\delta$ for some $\delta > 0$ for $i \gg 1$, which yields a contradiction. We similarly find a contradiction if $T' < T_{m^0_\infty}$. \end{proof}

Our final preparatory continuity lemma is the following: 

\begin{lem}\label{m(T)_cont} For fixed $|s| \gg 0$, consider the flows $(\Sigma^{m_0}_s)_t$ from above. Then $m(T_{m^0})$ is continuous as a function of $m^0$. 
\end{lem} 
\begin{proof} As before, we consider a sequence for which $m^0_i \to m^0_\infty$. As in the previous argument we note that along this family of flows $m(t)$ is not a fixed function, but depends implicitly on $m^0$. To make this clear in this proof, we will denote by $m_i(t)$ the local minimum from the axis of rotation on the flow $(\Gamma^{m^0_i}_s)_t$ and $m_\infty(t)$ on the limit $(\Gamma^{m^0_\infty}_s)_t$  (extended in the convex and disconnected cases as indicated in Definition \ref{minmax_defn}). We now need to show that 
\begin{equation*}
    |m_i(T_{m^0_i}) - m_\infty(T_{m^0_\infty})| \to 0.
\end{equation*}
For this, first note that, similarly to the proof of Lemma \ref{stop_cont}, we can write
\begin{equation*}
    m_i(T_{m^0_i}) - m_\infty(T_{m^0_\infty}) = (m_i(T_{m^0_i}) - m_i(T_{m^0_\infty})) + (m_i(T_{m^0_\infty}) - m_\infty(T_{m^0_\infty})).
\end{equation*}
By Lemma \ref{continuous_dependence} (one can check even if $m_\infty(T_{m^0_\infty)} = 0$) we have for the second term:
\begin{equation}\label{m(T)_cont_first_term}
    |m_i(T_{m^0_\infty}) - m_\infty(T_{m^0_\infty})| \rightarrow 0.
\end{equation}
as $i \rightarrow \infty$. For the other term, suppose for the sake of contradiction there is an $0 < \epsilon \ll 1$ (and a subsequence, which after relabeling we may suppose is the entire sequence) for which $|m_i(T_{m^0_i}) - m_i(T_{m^0_\infty})| > \epsilon$. By \eqref{m(T)_cont_first_term} and Lemma \ref{stop_cont}, we know that $m_i(T_{m^0_\infty}) \to m_\infty(T_{m^0_\infty})$ and $T_{m_i^0} \to T_{m^0_\infty}$. With this in mind we split our reasoning (mostly for exposition related to the gradient estimates) into two cases: $m_\infty(T_{m^0_\infty}) > 0$ and $m_\infty(T_{m^0_\infty}) = 0$. For each of these cases, we will arrive at a contradiction.

In the first case, we find a contradiction via the gradient bounds from Theorem \ref{aag_grad} (again, using that at this step $s$ is fixed and the remarks surrounding it). To elaborate, in the following take $i \gg 1$ so that $|m_i(T_{m^0_\infty}) -m_\infty(T_{m^0_\infty})| < \epsilon/2$. By shrinking $\epsilon$ if necessary, this also implies $m_\infty(T_{m^0_\infty}) < 2m_i(T_{m^0_\infty})$ so that the values $m_i(T_{m^0_\infty})$ are uniformly positive in $i \gg 1$, in fact so that say $m_i(T_{m^0_\infty}) > \epsilon$. Its easy to see that the $m_i(t)$ are continuous, so then for $i$ large over the interval $[T_{m^0_i}, T_{m^0_\infty}]$ (or $[T_{m^0_\infty}, T_{m^0_i}]$ if $T_{m^0_\infty} < T_{m^0_i}$) the images of $m_i(t)$ must contain uniformly positive subintervals of length at least $\epsilon/2$. The gradient bounds from Theorem \ref{aag_grad} and Ecker--Huisken estimates applied about the points where $m_i(t)$ are attained for these intervals imply that there is some constant $C$, independent of $i$, for which $\epsilon/2 < C|T_{m^0_\infty} - T_{m^0_i}|$. Since $|T_{m^0_\infty} - T_{m^0_i}| \to 0$, this yields a contradiction for $i \gg 1$. 

In the latter case ($m_\infty(T_{m^0_\infty}) = 0$) we can still proceed basically as above, but we split this case into its own because our curvature estimates degenerating about the axis of rotation may give pause for concern. We recall that for $i \gg 1$, $|m_i(T_{m^0_\infty})| < \epsilon/2$. Thus, since $|m_i(T_{m^0_i})| > \epsilon$, for $i \gg 1$ over the interval $[T_{m^0_i}, T_{m^0_\infty}]$ (or $[T_{m^0_\infty}, T_{m^0_i}]$ if $T_{m^0_\infty} < T_{m^0_i}$) $m_i(t)$ must cover an interval of length at least $\epsilon/2$; in particular the interval $(\epsilon/2, \epsilon)$. At least for this interval, when $\epsilon/2 < m_i(t)  < \epsilon$, we have uniform gradient estimates over $x = 0$ so as before there is some constant $C$ independent of $i$ for which $\epsilon/2 < C|T_{m^0_\infty} - T_{m^0_i}|$. Since $|T_{m^0_\infty} - T_{m^0_i}| \to 0$ this yields a contradiction for $i \gg 1$ as before.\end{proof}
 
We now use the lemmas above to construct our old--but--not--ancient flows via a continuity argument; below $d_n > 1$ is another dimensional constant that we take at least large enough that for $s \leq -d_n 10^{100}$ we have $\ell(s) > 2 \cdot c_n 10^{10}$:
     \begin{prop}\label{good_OBNA_flow_existence} For each $s \leq -d_n10^{100}$, there is a choice of initial neck parameter $\overline{m}^0 = \overline{m}^0(s) \in [\delta, \ell(s)]$ so that the flow $\left(\Sigma^{\overline{m}^0}_{s}\right)_t$, $t \in [0,T_{\overline{m}^0}]$, is smooth, rotationally and reflection symmetric, trapped in a slab of width less than $10$ and  at $t = T_{\overline{m}^0}$ is connected with $M(T_{\overline{m}^0}) = c_n 10^{10}$ and $1/2 < m(T_{\overline{m}^0}) < 1$. 
    \end{prop}
    \begin{proof}
    By a barrier argument using the Angenent torus and the design of $f_{m^0}$, when $m^0 \ll 1$ (shrinking $\delta$ as needed) we know a neckpinch will quickly form, while if $m^0 \gg 1$ the flow will quickly become convex using that as $m^0 \to \ell(s)$ the limit is convex (in particular, one may employ Theorem A.2 in \cite{MP1}). For $s\le -d_n10^{100}$, let $T_{m^0}(s)$ be the first time that $M(T_{m^0}(s))=c_n10^{10}$ as before. We see that for $m^0 \ll 1$ and for $m^0 \gg 1$ the flow will have either one or two components by time $T_{m^0}(s)$ and appear roughly as in Figure \ref{neck_dichotomy_fig} below. With this in mind, we define the following sets:
        \begin{align*}
        N_\alpha &:= \left\{m^0\in [\delta,\ell(s)]: (\Sigma_s^{m^0})_{T_{m^0}(s)}\mathrm{~has~two~connected~components}\right\},\\
            N_\beta &:= \left\{m^0\in [\delta,\ell(s)]: (\Sigma_s^{m^0})_{T_{m^0}(s)}\mathrm{~is~connected~and~convex}\right\}.
        \end{align*}        
            
        \begin{figure}[h]
          \centering
          \begin{tikzpicture}[rotate=-90]
            \draw[very thick] (1.5,6) arc (0:180:1.5 and 3);
            \draw[very thick] (-3,6) -- (3,6)  node [midway] {\AxisRotator[rotate=-90]};  

            \draw[very thick] (-1,0) arc (0:180:0.5 and 1.5);
            \draw[very thick] (2,0) arc (0:180:0.5 and 1.5);
            \draw[very thick] (-3,0) -- (3,0)  node [midway] {\AxisRotator[rotate=-90]};  
          \end{tikzpicture}
          \caption{Qualitative behavior of $(\Sigma^{m^0}_s)_{T_{\overline{m}^0}(s)}$ for $m^0 \ll 1$ on the left and for $m^0 \gg 1$ on the right.}
          \label{neck_dichotomy_fig}
        \end{figure}

        From the discussion above, $N_\alpha$ and $N_\beta$ are both nonempty and correspond to $m(T_{m^0}) = 0$ and $m(T_{m^0}) = c_n10^{10}$ respectively (recalling above how we defined $m$ when there was no local minimum). Thus Lemma \ref{m(T)_cont} and the intermediate value theorem tell us there must be some $\overline{m}^0 \in [\delta, \ell(s)]$ for which         
        \begin{equation}\label{min_bound}
            \frac{1}{2} < m(T_{\overline{m}^0}) < 1.
        \end{equation}
        Note that by Theorem \ref{aag_thm3} the flow $(\Sigma^{\overline{m}^0}_s)_{t}$ up to time $T_{\overline{m}^0}$ must be smooth. The other properties are similarly easy to observe.
        \end{proof}
As a consequence of the above we produce smooth flows $\left(\Sigma^{\overline{m}^0}_{s}\right)_t$ which at their end time, $T_{\overline{m}^0}(s)$, are uniformly bounded and uniformly nonconvex. Of course, all such flows have graphical profile curves. Finally, we can take a limit of these flows to obtain an ancient Brakke flow:
\begin{cor}\label{ancient_flow_existence}
       There exists an ancient unit regular, cyclic mod 2 Brakke flow $B_t$, $t \in (-\infty, 0]$ that is contained in a slab and given by a limit of smooth rotationally and reflection symmetric flows with graphical profile functions such that $\supp(B_0)$ is non-empty, compact, connected, and is not the boundary of a convex domain. 
    \end{cor}

\begin{proof}
        Consider the smooth flows $(\Sigma^{\overline{m}^0(i)}_{s_i})_{t}$ with $s_i := -d_n10^{i+100}$ and corresponding neck parameters $\overline{m}^0(i) := \overline{m}^0(s_i)$ from Proposition \ref{good_OBNA_flow_existence}, where $t \in [0, T_{\overline{m}^0(i)}]$. For each flow, recenter time so that $T_{\overline{m}^0(i)} = 0$ to obtain a sequence of flows $M^i_t$ and note that the $M^i_0$ are each uniformly bounded (in particular $M_i(0) = c_n10^{10}$), connected (by \eqref{min_bound}), and do not bound a convex domain (in fact, they are a uniform distance away from any such set in the Hausdorff distance). By the choice of $T_i := T_{\overline{m}^0(i)}$ and that $s_i \rightarrow -\infty$ as $i \rightarrow \infty$, the flows $M^i_t$ are defined on time intervals $[-T_i, 0]$ with $T_i \to \infty$ using inner pancake barriers similar to the proof of Lemma \ref{existence_time_estim}. One can check by the design of the initial data and monotonicity that we have the entropy $\lambda(M^i_t)$ is uniformly bounded, so by the Brakke compactness theorem we can extract a subsequence of the Brakke flows $M^i_t$ which converges to a limiting ancient Brakke flow $B_t$. Since each of these are unit regular and cyclic the limit will be too. Note by lemma \ref{Hau_conv}, we also have the convergence is in Hausdorff topology in the manner indicated therein from which it easily follows $B_t$ has the properties claimed above from the design of the $(\Sigma^{\overline{m}^0(i)}_{s_i})_{t}$.
    \end{proof}

    \begin{rem}
        We will frequently use that the flows $M^i_t$ approximate the Brakke flow $B_t$ to study this limit. As mentioned in the proof above, this implies that their supports converge in the Hausdorff distance on bounded domains $U$ as in Lemma \ref{Hau_conv}. However, the rate of convergence depends on $U$ and $t$ and in particular a priori there is not necessarily an index $k$ large enough for which $M^k_t$ is globally close to $B_t$.


    \end{rem} 
    \begin{rem} For the sake of simplicity we made a number of concrete choices above, but it is easy to see we may freely vary the pancake thickness and separation, as well as the end time neck width, when specifying the initial data for $(\Sigma^{\overline{m}^0(i)}_{s_i})_{t}$. 
    \end{rem}
\subsection{Regularity and other properties of the ancient Brakke flow $B_t$.} Roughly speaking, the weak ancient flow from Corollary \ref{ancient_flow_existence} has the properties we wish, but our final goal is to upgrade it into a ``true'' smooth ancient solution to MCF. To set some notation, we will often write:
       \begin{itemize}
    \item $\Gamma^i_{t}$ for the profile curves of the old-but-not-ancient solutions $M^i_{t}$ of Corollary \ref{ancient_flow_existence}, which are time shifts of $\left(\Gamma^{\overline{m}^0(i)}_{s_i}\right)_{t}$, $\left(\Sigma^{\overline{m}^0(i)}_{s_i}\right)_{t}$ respectively by $-T_{\overline{m}^0(i)}$.
    
    \item Analogously, $\supp(\Gamma^\infty_t)$ will denote what one rotates around the axis of rotation to get $\supp(B_t)$.
    \end{itemize}
       
       With this in mind, we first prove the following:

    \begin{lem}\label{silky_smooth}
        For all times $t\le 0$, $\supp(B_t)$ is compact and connected.
    \end{lem}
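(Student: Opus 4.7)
Plan: I would verify the two assertions separately, both of them relying on Brakke's regularity theorem at the smooth time $t$ together with the structure of the approximating sequence $M^i_t$.

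\emph{Compactness.} Slab containment descends immediately, since each $M^i_t$ lies in the fixed slab $\{|x| \le 1 + 2\pi\}$ (preserved by MCF via avoidance with stationary vertical hyperplanes, and inherited from the construction of $\Sigma^{\overline{m}(i)}_{s_i}$), and this passes to the Brakke limit. The radial bound is more delicate. At the smooth time $t$ the Gaussian density of $B_t$ equals one at every point of its support, so Brakke's regularity theorem yields smooth, multiplicity-one convergence of $M^i_t$ to $B_t$ in a neighborhood of each point of $B_t$. The profile curves $\Gamma^i_t$ are graphs over the $x$-axis (graphicality is preserved by Theorem~4.3(b) of \cite{aag}) with at most three critical points (Theorem \ref{aag_thm2}). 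Upper semicontinuity of Gaussian density forces subsequential limits of the axis endpoints and critical points of $\Gamma^i_t$ to lie in $B_t$, so the limit profile $\Gamma_t$ is a finite union of smooth graphical arcs in the slab, each with a controlled number of critical points. A vertical asymptote of the form $u(x,t) \to \infty$ would correspond to a cylinder-like end of $B_t$ at large $r$; such an end has extinction time $\sim R^2/(2(n-1))$ under forward MCF which, for sufficiently large $R$, exceeds $|t|$ and would persist past $t = 0$, contradicting the compactness of $B_0$ (Lemma \ref{ancient_flow_existence}). Hence $\Gamma_t$ is bounded in $r$, and $B_t$ is compact.

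\emph{Connectedness.} Suppose for contradiction $B_t = A \sqcup C$ with $A, C$ disjoint nonempty closed subsets (both compact, by the previous step) and $d := \operatorname{dist}(A, C) > 0$. Brakke's regularity theorem again gives smooth, multiplicity-one convergence $M^i_t \to B_t$ in a neighborhood of $A \cup C$, so for $i$ sufficiently large $M^i_t$ meets the $d/4$-neighborhoods of both $A$ and $C$. Connectedness of $M^i_t$ then forces an arc within $M^i_t$ joining these neighborhoods, whose portion in the open gap $G := \{p : \operatorname{dist}(p, A \cup C) > d/3\}$ has length at least $d/3$. Covering this portion by balls of small fixed radius centered on $M^i_t$ and invoking the Euclidean mass lower bound furnished by the monotonicity formula (available because of the uniform entropy bound $\lambda(M^i_t) \le 5$), the $n$-dimensional mass $\mu_{M^i_t}(G)$ is bounded below by a positive constant independent of $i$. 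On the other hand $\mu_{B_t}(G) = 0$ since $G \cap B_t = \emptyset$, and Radon-measure convergence gives $\mu_{M^i_t}(G) \to 0$, the desired contradiction.

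The main obstacle lies in the radial boundedness in the compactness step: one must carefully combine the graphical structure of the approximators, the Sturmian bound on critical points, and forward MCF (together with the compactness of $B_0$) to rule out a ``horn'' of the limit profile extending to $r = \infty$. The connectedness argument, by contrast, is essentially a standard smooth convergence plus mass lower bound argument once compactness is in hand.
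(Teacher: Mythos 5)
Your connectedness argument is fine once compactness is in hand (and is a reasonable alternative to the paper's more direct approach, which simply observes that two compact components separated by a vertical hyperplane can never reconnect by avoidance, contradicting connectedness of $B_0$). However, the compactness step contains a genuine gap, and it is precisely there that the paper does nearly all of its work.

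The assertion that ``a cylinder-like end of $B_t$ at large $r$ ... has extinction time $\sim R^2/(2(n-1))$ under forward MCF ... and would persist past $t=0$'' is not justified. An unbounded end of a graphical profile curve need not behave like a single sheet of a shrinking cylinder: by reflection symmetry the profile of $B_t$ consists of arcs that come in pairs, and an unbounded end can take the form of a \emph{cusp} --- two sheets of the profile curve approaching each other as $r\to\infty$. Under forward mean curvature flow such a cusp can close up in finite (indeed arbitrarily short) time, with mass ``rushing in from spatial infinity'' to cap it off; this is exactly the phenomenon in Example~4.6 of Ilmanen that the paper cites. So the cylinder time-scale heuristic does not bound how quickly the end can disappear, and non-compactness of $B_t$ is compatible with compactness of $B_0$ without further argument. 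Ruling this out is where the paper spends its effort: it first shows any such cusp would have to be ancient (finite-time cusp formation is excluded by sphere barriers), then proves an area estimate for the forced curve-shortening flow \eqref{rotsym_evol} showing the region the cusp encloses in $\{r>c\}$ has uniformly bounded area, and finally uses CSF as an upper barrier (the forcing term is negative) together with the area-decay argument of Bourni--Reiris to force the cusp to clear out immediately, contradicting its being ancient. The remaining case of a connected non-compact profile with at most one critical point is handled separately via an entropy comparison with $B_0$. None of these ingredients appears in your sketch, so the radial bound is not established.
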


    \begin{proof} We break the proof up into a number of steps, where we first discuss connectedness. Throughout, we will use $t^\ast$ to denote some fixed negative time.

        \textbf{Claim 1: If $\supp(B_{t^*})$ is disconnected it must be non-compact.}
        
         Note that, by Corollary \ref{ancient_flow_existence}, $\supp(B_0)$ is connected. Thus, if $\supp(B_{t^*})$ is disconnected then the number of connected components must decrease at a later time. If $\supp(B_{t^*})$ is compact, then the avoidance principle, which holds for Brakke flows, tells us by using a plane placed between the connected components as a barrier that $\supp(B_{t^*})$ cannot become connected at later times. This yields a contradiction. \hfill \qed 

        In light of Claim $1$, we may suppose that $\supp(B_{t^*})$ is non-compact in addition to being disconnected.

        \textbf{Claim 2: If $\supp(B_{t^*})$ is non-compact and disconnected, it must have ``cusps'' in the sense that there are connected components which approach each other at spatial infinity.}

        We recall the avoidance principle for non-compact flows (for instance, see Theorem 1 in \cite{White_avoidance}), which gives that if two noncompact level set flows in $\R^{n+1}$ (which Brakke flows are) are a uniform distance apart then they stay disjoint, and recall that the  approximating flows $\Gamma^i_{t^\ast}$ are all graphical over the axis of rotation and trapped in a slab of uniform width. We then see that the only way the number of connected components of $\supp\left(\Gamma^\infty_{t^\ast}\right)$ could decrease is if two (non-compact) components of $\supp(\Gamma^\infty_{t^*})$ are asymptotic to each other at spatial infinity along a line $\ell$ perpendicular to the axis of rotation to form what we'll call a cusp. Indeed in this case, mass may rush in from spatial infinity through such a cusp to form a cap joining the two components sometime after, similar to Example 7.3 in \cite{I2}. In fact, by using large spheres as barriers, cusps cannot form at finite times along the flow by the avoidance principle. In other words, such cusps must be ancient in the sense that they correspond to cusps which exist for all times $t \in (-\infty, {t^*})$. \hfill \qed 

        Next we argue that such ancient cusps cannot exist. As an aside, one approach is to first argue that such cusps bound regions of finite area. This uses that the flow must eventually be compact by the design of the ancient Brakke flow $B_t$ at $t = 0$ and that the rate of change of area bounded by a curve evolving under either CSF or the forced flow \eqref{rotsym_evol} is well controlled by the turning number. Then, using the ``slingshots'' of Bourni and Reiris from \cite{BR}, one can see that the approximating flows $\Gamma^i_t$ must in fact lie in a uniformly bounded region shortly after a given time $\overline{t} < t^*$. This contradicts that the cusp must be ancient (as observed in the argument above) if $\overline{t}$ is taken to be sufficiently negative. In the following claims, we detail a different approach using the Sturmian principle. In the ancillary claim below, we only consider $i$ large enough that $M^i_t$ to be defined at $t = t^*$.

        \textbf{Claim 3: For $i$ sufficiently large, $m_i(t^*)$ is bounded above by $c_n10^{10}$.}

        In the following we recall, say from section 4 of \cite{aag}, that the thickness of a catenoid is a linearly increasing function of its neck width. Now, suppose on the contrary that $m_i(t^*) > c_n10^{10}$ for some subsequence, which we then relabel to the original, of $\Gamma_{t^\ast}^i$. From this and since the $\Gamma^i_t$ are all trapped in slabs of width less than 10 (to give a crude but sufficient bound) if we adjust $c_n$ to be sufficiently large, which we'll potentially take larger yet in the next claim, there exists a catenoid of thickness greater than 10 whose profile curve $W$ intersects $\Gamma^i_{t^\ast}$ at exactly two points less than distance $c_n10^{10}$ from the axis of rotation and whose neck width $m_W$ is strictly larger than 1. In fact we may arrange that for all $t\in [t^*, 0]$ that $W$ could only possibly intersect $\Gamma^i_{t}$ at points less than distance $c_n10^{10}$ from the axis of rotation. With this in mind since $M_i(t) \geq c_n10^{10}$ for $t \leq 0$ by reflection symmetry and the Sturmian principle $m_i(t) > m_W > 1$ for all $t \in [t^*, 0]$; otherwise the minimum of $\Gamma^i_t$ would ``poke through'' $W$ and cause the number of intersection points to increase. This contradicts the design of $M^i_0$, however.  \qed\\

        As mentioned in the introduction, this seems to imply that the backwards limit, if smooth, should contain a catenoid. With this claim in hand, we are prepared to rule out cusps:

        \textbf{Claim 4: The cusps from Claim 2 do not exist.}
     
         We will suppose that such a cusp exists and obtain a contradiction. Below, it will often be helpful to consider the unshifted flows $\left(\Gamma^{\overline{m}^0(i)}_{s_i}\right)_{t}$, $\left(\Sigma^{\overline{m}^0(i)}_{s_i}\right)_{t}$ where we recall $\Gamma^i_t = \left(\Gamma^{\overline{m}^0(i)}_{s_i}\right)_{t + T_{\overline{m}^0(i)}}$ and similarly for $M^i_t =\left(\Sigma^{\overline{m}^0(i)}_{s_i}\right)_{t + T_{\overline{m}^0(i)}}$ because we will arrange ``comparison pancakes'' relative to their initial data. 
         
         Now, recall that by the graphicality of $\Gamma^i_t$ we see that a cusp as described in Claim 2 is asymptotic to a line $\ell$ which is perpendicular to the axis of rotation. With this in mind, below we consider comparison pancakes $P^i$ (with profile curve $\gamma^i$) of asymptotic thickness $\pi$, rotationally symmetric about the axis $r = 0$ such that:
        \begin{itemize}
        \item The maximum distance $d^i$ of $\gamma^i$ to the axis of rotation satisfies $d^i > M_i(0)$. 
        \item The ``tip'' of $\gamma^i$, i.e. the point where $d^i$ is achieved, is a distance less than $1/100$ from $\ell$. 
        \item $\gamma^i$ meets $\left(\Gamma^{\overline{m}^0(i)}_{s_i}\right)_{0}$ at up to two points; this holds generically by the asymptotics of the pancakes and design of initial data.

        \end{itemize}
        Denote by $P^i_t, \gamma^i_t$ the flow of $P^i, \gamma^i$ respectively  and $d^i(t)$ the flow of $d^i$. Our first goal is to get uniform bounds on $d^i(t)$. Below we will write $T_i := T_{\overline{m}^0(i)}$ for short.
        
        To start, we claim that $T_i$ is at least as large as the time it takes for $d^i(t)$ to equal $c_n10^{10} + 2(d^i(0) - M_i(0))$. For this, we will use the following crude fact on the asymptotics of the standard ancient pancake $P_s$, $s \in (-\infty, 0)$ of asymptotic thickness $\pi$ (as in section \ref{old_not_ancient_construction}). For $s \ll 0$ (or in other words $\ell(s)$ sufficiently large), the inwards speed of the tip of $P_s$, $-\frac{d \ell}{dt}$, is crudely bounded between $1$ and $2$ from the asymptotics in \cite{blt_pancakes}; roughly speaking, for $|s|$ large the profile curve of a standard pancake is close to that of a grim reaper of width $\pi$ which translates under the CSF with speed 1, and the extra terms in equation \ref{rotsym_evol} from rotating the profile curve decay $\sim (n-1)/\ell(s)$ away from the axis of rotation. 
        
        In the following, we take $c_n, d_n$ to ensure we may use the speed bounds above. We can always take these constants larger as needed, and just depend on the $n$--dimensional standard pancake; doing so will not affect the argument of the previous claim, either. First, since (two) pancakes $P_{s_i}$, $s_i \leq -d_n10^{100}$, were used in constructing $(\Sigma^{\overline{m}^0(i)}_{s_i})_{t}$ the age of the comparison pancake $P^i$ is no older than $s_i - (d^i(0) - M_i(0))$ from the speed lower bound, by which we mean up to translation $P^i$ above is contained in $P_{s_i - (d^i(0) - M_i(0))}$. Now, using shifts of $P_{s_i}$ along the axis of rotation as inner barriers as indicated for Lemma \ref{existence_time_estim} we see $T_i$ is at least as large as the time $\overline{T_i}$ it takes for the tip distance of $P_{s_i + t}$ to equal $c_n10^{10}$. The distance between the tips of $P_{s_i + \overline{T_i}}$ and $P_{s_i - (d^i(0) - M_i(0)) + \overline{T_i}}$ is at most $2(d^i(0) - M_i(0))$ by the crude speed bound above, giving by the convexity of the pancake solution that $T_i$ is at least as large as the time it takes for $d^i(t)$ to equal $c_n10^{10} + 2(d^i(0) - M_i(0))$.
        
        As a simple observation illustrating this, if we initially arrange for $d^i(0) -M_i(0)< 1$ then the tip of $\gamma^i_{T_i}$ will be a uniformly bounded distance from the axis of rotation, we emphasize independent of $i$. Similarly, for some time $t^\ast < 0$ we can crudely estimate:
        \begin{equation*}
            d_i(t^\ast + T_i) < c_n10^{10} + 2(|t^\ast| + (d^i(0) - M_i(0))).
        \end{equation*}
        Where $i$ is large enough that $t^\ast + T_i > 0$. In particular, we can arrange $\gamma^i$ so that $d^i(t^\ast + T_i) < c_n10^{10} + 2|t^\ast| + 2$, no matter how large $i$ is (in fact, the flow $P^i_t$ could conceivably go extinct by this time, but for our purposes this is fine).
        
        With this in mind, fix a point $q \in \ell$, $r(q) \gg c_n10^{10} + 2|t^\ast| + 2$ so that, for $i \gg 1$, $\Gamma^i_{t^\ast} \cap B(q, 1) = \left(\Gamma^{\overline{m}^0(i)}_{s_i}\right)_{t^\ast + T_i} \cap B(q, 1)$ consists of at least two approximately (in $C^0$ distance) vertical line segments both of distance less than $1/100$ from $q$. Note this is possible due to Lemma \ref{Hau_conv} (and the assumed existence of the cusp). Choosing an index $k$ large enough that the above and the previous claim holds, to reiterate we have that if $d^k = d^{k}(0) < M_k(0) + 1$ then $d^k(t^\ast + T_{k}) <  c_n10^{10} + 2|t^\ast| + 2$. In particular, it is clear starting from this choice that we may simply increase $d^k$ appropriately so that the tip of $\gamma_{t^\ast + T_i}^{k}$ intersects $B(q,1)$ near $q$ -- we may also slightly shift $P^k$ along the rotation axis to not affect the number of intersection points with $\left(\Gamma^{\overline{m}^0(k)}_{s_k}\right)_{0}$. As a result of this, we see the tip intersects $\left(\Gamma^{\overline{m}^0(k)}_{s_k}\right)_{t^\ast + T_i}$ in $B(q, 1)$ at least twice; a possible configuration is illustrated in Figure \ref{Sturm_cusp}. By the reflection symmetry and connectedness of $\left(\Gamma^{\overline{m}^0(k)}_{s_k}\right)_{t}$ if these were the only two points of intersection with the pancake then we see $m_i(t^* + T_i) > r(q)$ contradicting the previous claim, giving that the number of intersection points of the profile curve of $\gamma^{k}_t$ with  $\left(\Gamma^{\overline{m}^0(k)}_{s_k}\right)_{t}$ must have then increased (there must be at least 3 points). This contradicts the Sturmian principle, and therefore the claim holds. \hfill \qed
            \begin{figure}[h]\centering
   \includegraphics[width=1\textwidth]{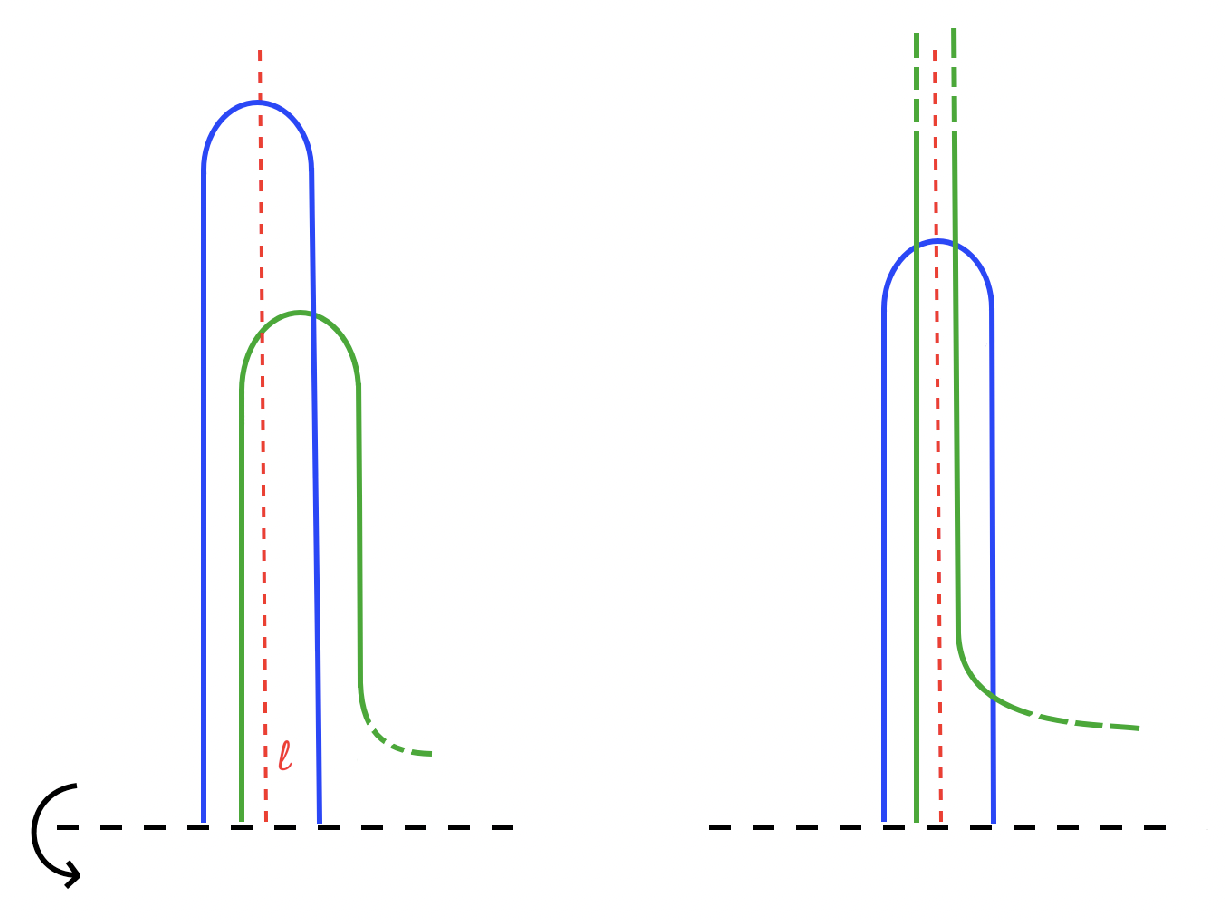}
    \caption{The profile curves in the diagram above show a potential configuration of the ``comparison pancake'' $\gamma_t^k$ in blue and  $\left(\Gamma^{\overline{m}^0(k)}_{s_k}\right)_{t}$ in green at times $t = 0$ and $t = t^\ast + T_k$ respectively (for illustrative purposes the relative scales differ, and only a piece of the curves are drawn). Here the number of intersection points would increase from $1$ to $3$, violating the Sturmian principle.}
    \label{Sturm_cusp}
\end{figure}
        
        It now remains to rule out $\supp(B_{t^*})$ being connected yet non-compact; this is similar to the previous steps: 

        \textbf{Claim 5: If $\supp(B_{t^*})$ is connected, it cannot be non-compact.} \\
Suppose for the sake of contradiction that $\supp(B_{t^*})$ and hence $\supp(\Gamma^\infty_{t^*})$ is connected and non-compact. First, using the unboundedness, by the graphicality of the profile curves $\Gamma_t^i$ and that they are trapped in a slab of uniform width we see that $\supp(\Gamma^\infty_{t^*})$ is asymptotic to a number of lines perpendicular to the axis of rotation; since $B_0$ is bounded by a proof by contradiction using the avoidance principle with appropriately thick inner pancake barriers the corresponding convergence of the $\Gamma^i_{t^*}$ to these lines must have multiplicity at least $2$. Note because each of the $\Gamma^i_{t^*}$ has exactly two maxima there are at most two such lines (and possibly just one, if the flow degenerates). In fact, since $\supp(\Gamma^\infty_{t^*})$ is connected and, again, that each of the $\Gamma_t^i$ are graphical one can see that $\supp(\Gamma^\infty_{t^*})$ must contain subrays, or ``prongs,'' of these asymptotic lines pointing out from the axis of rotation. By reflection symmetry and the bound on number of maxima the multiplicity of such prongs are either 2 or 4. These can then be ruled out reasoning similar to the previous claim.  \qed\\

To recap, we showed that if supp$(B_t)$ at a time $t^*$ is disconnected it must be non-compact and, using the design of the flow at $t = 0$, must be asymptotic to ``cusps''. We then ruled out such cusps via the Sturmian principle to see that the flow must be connected, by an argument which also rules out high multiplicity rays. From this it is easy to see that supp$(B_t)$ is bounded, giving the lemma. \end{proof}
Having established the boundedness and connectedness of the flow for any given time, we next establish regularity in part using Theorem \ref{aag_grad}. Of course, by design all the approximating flows are smooth but this doesn't seem to immediately rule out singularities in the limit. Some care is also needed to rule out singularities along the axis of rotation, by an argument which seems to be specific to dimensions $n \geq 3$; this is discussed more in remark \ref{dimrem} below.

\begin{lem}\label{smooth_seq} 
    The ancient Brakke flow $B_{t}$ is smoothly embedded for all $t \in (-\infty, 0]$.
    \end{lem}
    \begin{proof} 
    For concreteness, consider a time $t^* < 0$ and suppose it is a singular time for the limiting flow $B_t$. Denote by $p$ a point in the singular set of $\supp(\Gamma^\infty_{t^*})$, where we recall $\supp(\Gamma^\infty_t)$ is what one rotates around the axis of rotation to get $\supp(B_t)$. 
    
    First, we rule out singularities off the axis of rotation. Considering a time $\overline{t}$ shortly before $t^\ast$, Lemmas \ref{silky_smooth} and \ref{Hau_conv} applied at time $\overline{t}$ gives the smooth approximating flows $\Gamma^i_{\overline{t}}$ are all uniformly bounded for $i$ sufficiently large. This allows us to then apply Theorem \ref{aag_grad} along with the Ecker--Huisken estimates \cite{EH} as above to $\Gamma^i_{t^*}$ to see that about any point $p$ not along the axis of rotation $M^i_{t^*}$ has uniformly bounded curvature (and derivatives) for $i \gg 1$. This easily implies $B_{t^\ast}$ must in fact be smooth at any point away from the origin, or in other words there can be no off-axis singularities.

    We next claim that $p$ does not lie on the axis of rotation. Supposing that $p$ lies on the axis of rotation, we first rule out it being on the top or bottom caps of $\supp(B_{t^\ast})$ (defined in the obvious sense), inspired by Lemma 5.4 in \cite{aag}. To see this, since $M_i(t) > c_n 10^{10}$ for all $i$ and all $t < 0$ the flows of $M^i_t$ about their top and bottom caps can be written intrinsically locally as graphs in balls of uniformly large radius. Because the flows are trapped in slabs of uniform width, giving $C^0$ bounds in this context, the Evans-Spruck estimates (specifically, see Lemma 2.2 in \cite{aag}) thus give us uniform gradient, and hence higher order, bounds about their top and bottom caps, which implies that any tangent flow about the top and bottom caps of $B_{t^\ast}$ must contain a plane possibly with multiplicity. Taking a point $p$ on a cap, if the multiplicity is $1$ then $p$ is regular, so suppose the multiplicity is at least $2$. In this case though by graphicality of the $\Gamma_t^i$ there would then be a singular point away from the axis of rotation which we just ruled out. 

Thus $p$ must be a limit of the local minima of $\Gamma^i_{t^\ast}$, arguing by contradiction with Lemma \ref{Hau_conv}. In the following, recall that the $\Gamma^i_t$ are simply time shifts of the profile curves $\left(\Gamma^{\overline{m}^0(i)}_{s_i}\right)_t$ by $-T_{\overline{m}^0(i)}$ -- for the sake of exposition here it will be more clear to refer to these explicitly and their associated functions $m_i(t)$, $t \in [0, T_{\overline{m}^0(i)}]$. Also recall the necks of the initial data of these profile curves have uniformly controlled geometry in that the the partial profile functions $f_{\overline{m}^0(i)}$ from Section \ref{old_not_ancient_construction} satisfy:

    \begin{enumerate}[label=\roman*)]
        \item they are defined over a uniform interval on the axis of rotation,
        \item they obey uniform $C^1$-bounds,
        \item $\overline{m}^0(i) > \delta$ (suitable $\delta$ can be fixed independent of $i$ by the above)
    \end{enumerate}

    In light of this, and that catenoids lie in slabs in dimension $n \geq 3$, we can fix a catenoid C with profile curve W so that, for $i \gg 1$, 
    \begin{equation*}
        \left|W \cap \left(\Gamma^{\overline{m}^0(i)}_{s_i}\right)_{0}\right| = 2.
    \end{equation*}
 In particular, we can ensure that $m_W < \frac{\delta}{2}$, where $m_W$ is the neck radius of $W$.

    With the above in mind, since the singular point $p$ lying on the axis of rotation is a limit of the local minima of $\Gamma^i_{t^*}$ we see there is a sequence of times $t_i \rightarrow t^\ast$ such that $m_i(t_i) \rightarrow 0$. Writing for shorthand $T_i := T_{\overline{m}^0(i)}$ as before, therefore by the Sturmian principle and time recentering we can see that, for $i \gg 1$,   
    \begin{equation}\label{contra_setup}
        W \cap \left(\Gamma^{\overline{m}^0(i)}_{s_i}\right)_{t_i + T_i} = \emptyset.
    \end{equation}
        Indeed because each $\left(\Gamma^{\overline{m}^0\left(i\right)}_{s_i}\right)_t$ is compact and embedded, with boundary on the axis of rotation, and the profile curve $W$ is boundaryless we see their mod 2 intersection number is invariant under the flow, so it is trivial since it is at $t = 0$. So, since initially $\left|W \cap \left(\Gamma^{\overline{m}^0(i)}_{s_i}\right)_0\right| = 2$ by the Sturmian principle they can only intersect $W$ at either $0$ or $2$ points (counting with multiplicity) at a given time $t$, and this number is nonincreasing in time. With this in hand, for times $t$ at which $m_i(t) < m_W$ it is easy to see we must have that $\left(\Gamma^{\overline{m}^0(i)}_{s_i}\right)_t$ and $W$ are in fact disjoint, with the profile curves in the configuration indicated in Figure \ref{noaxis_sing}: as long as the number of intersection points is $2$, $\left(\Gamma^{\overline{m}^0(i)}_{s_i}\right)_t$ intersects the closed ``outer'' domain bounded by $W$ (the one homeomorphic to $((0,1) \times \R)^c \subset \R^2$) in a possibly degenerate embedded interval $I_t$, and by reflection symmetry the point in $\left(\Gamma^{\overline{m}^0(i)}_{s_i}\right)_t$ over $x = 0$ lays in $I_t$ so for such $t$ $m_i(t) \geq m_W$. However, since $m_i(T_i) > \frac{1}{2} > \delta$ by \eqref{min_bound} (shrinking $\delta$ if needed), we see that for $i \gg 1$ there must be some $\overline{t_i} \in (t_i, 0)$ at which $m_i(\overline{t_i} + T_i) = m_W$, i.e.:
    \begin{equation*}
        W \cap \left(\Gamma^{\overline{m}^0(i)}_{s_i}\right)_{\overline{t_i} + T_i} \neq \emptyset.
    \end{equation*}
    This contradicts the avoidance principle due to \eqref{contra_setup} and $t_i < \overline{t_i}$, and hence there can be no singularities along the axis of rotation.  As the $\Gamma^i_{t^*}$ are all graphs of locally uniformly bounded gradient away from the axis of rotation the limit profile curve is embedded, giving the lemma. \end{proof}
    
      \begin{figure}[h]\centering
    \includegraphics[width=1\textwidth]{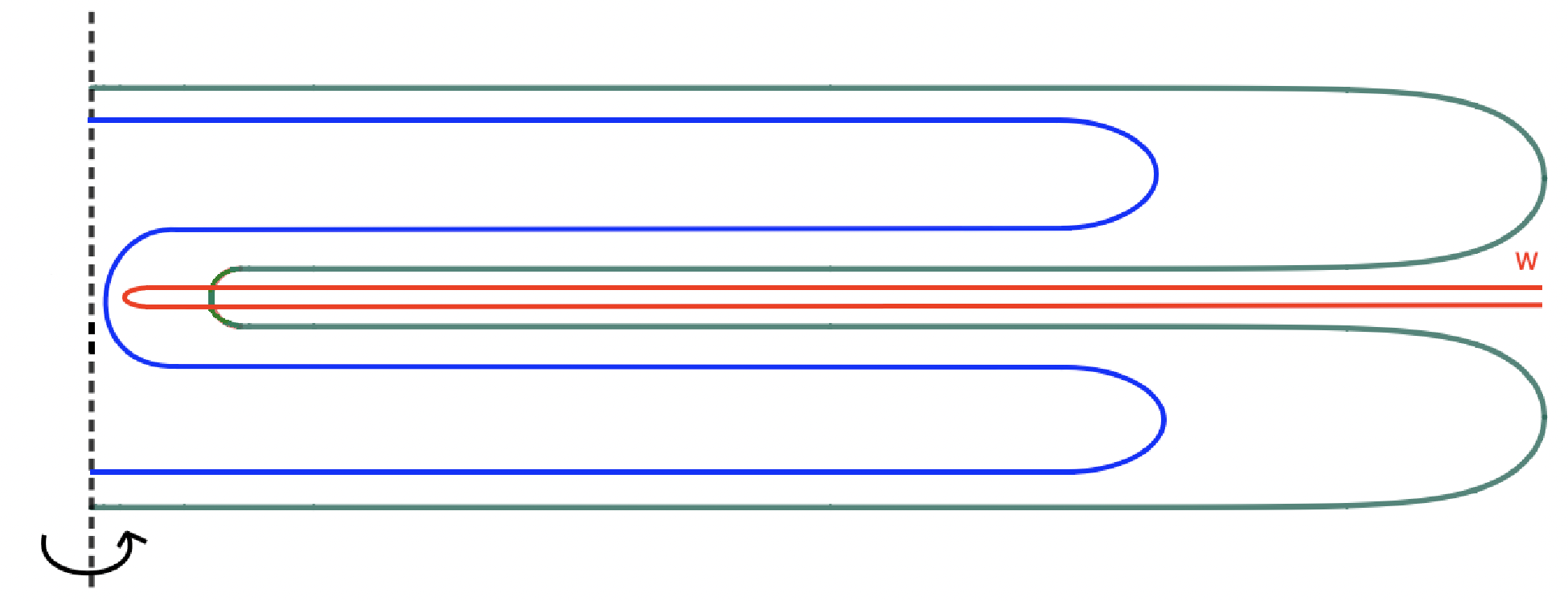}
    \caption{In the sketch above the red curve denotes the profile curve of the catenoid $W$, which we may arrange to intersect each $\left(\Gamma^{\overline{m}^0\left(i\right)}_{s_i}\right)_0$ (given in green) exactly twice. The blue curve denotes the hypothetical configuration where $m_i\left(t\right) < m_W$, so that $W$ later serves as a barrier.}
    \label{noaxis_sing}
\end{figure}

\begin{rem}\label{dimrem} In the argument above, the assumption that $n \geq 3$ entered when using an intersection counting argument with a catenoid, which in these dimensions lie in slabs. In dimension $n = 2$, the catenoid is entire and as a consequence, one can concoct plausible, from the point of view of the Sturmian principle, situations where the second part of the argument presented above, lower bounding $m_i(t)$, does not seem to hold as discussed in Figure \ref{lowdim_pic}. The scenario that $m(t)$ becomes small and then rebounds should be a possibility in a related context in \cite{SunChen_mult2plane}, where using continuous dependence of the flow appropriate perturbations of their construction should produce rotationally symmetric flows which "nearly" converge to a plane with multiplicity 2 in some neighborhood of the axis of rotation and then later on clear out. 
\end{rem}
\begin{figure}
    \centering
    \includegraphics[width=1\linewidth]{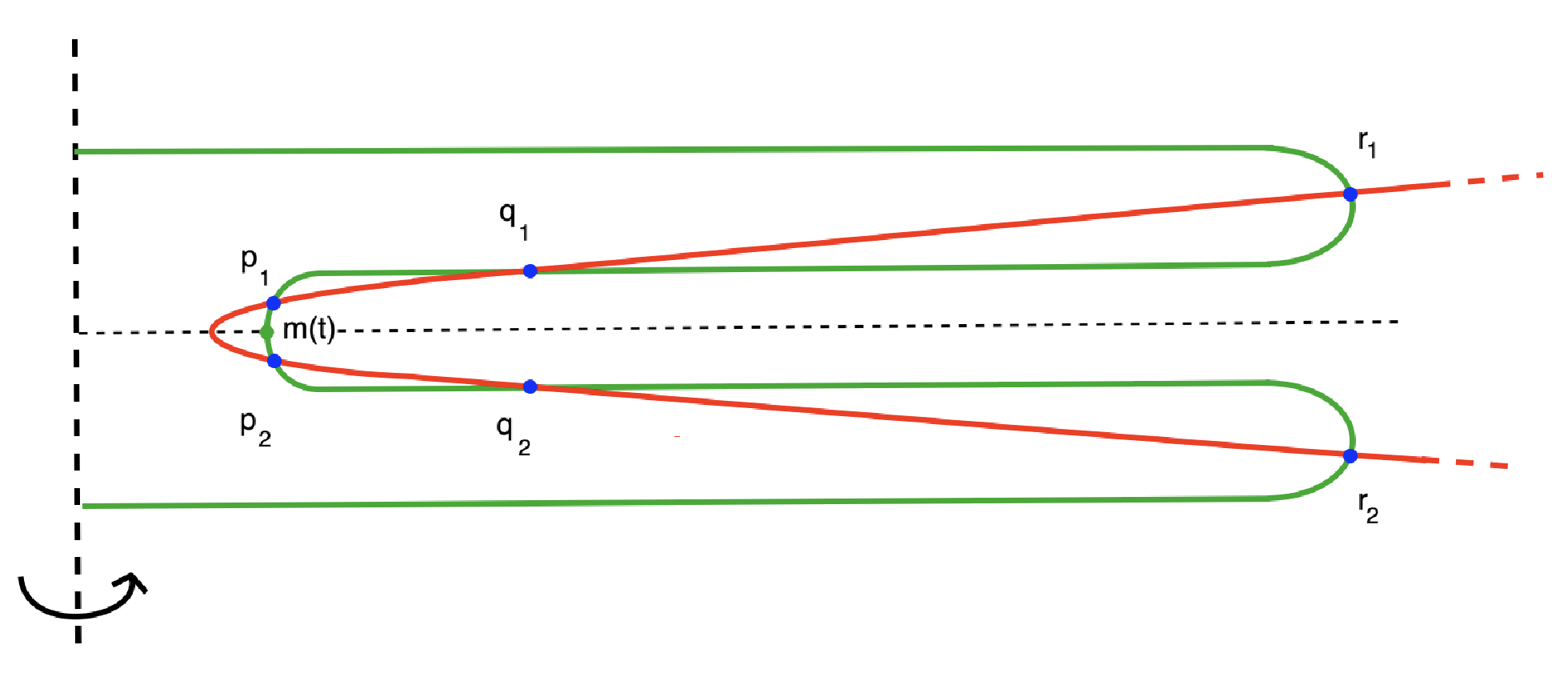}
    \caption{When $n = 2$ the profile curve of a fixed catenoid (with suitably thin neck), schematically illustrated in red, should intersect $\left(\Gamma^{\overline{m}_0(i)}_{s_i}\right)_0$, colored in green, in 6 points for $|s_i|$ large since the catenoid is entire in this dimension. These are labeled above by the pairs $p_j, q_j,$ and $r_j$. For $m(t)$ to become less than the neck width of the catenoid in the future, a plausible scenario is where the intersection points $p_1(t), p_2(t)$ come together to ``cancel'' and disappear (in a tangential intersection), leaving only $4$ intersection points remaining corresponding to the $q_j$ and $r_j$. In this situation $m(t)$ may yet become large again (i.e. $m(t) > 1/2$) by the stopping time by the points $q_1(t), q_2(t)$ canceling about the neck of the catenoid, at least without apparently violating the Sturmian principle.}\label{lowdim_pic}.
\end{figure}

        As a more hands on approach with dealing with off-axis singularities above it's not too hard to show that tangent flows at such points on $\Gamma^\infty_{t^\ast}$ must be modeled on multiplicity 2 or 4 lines.  With Lemma \ref{silky_smooth} in hand these correspond in $\Gamma^\infty_{t^\ast}$ to ``spikes'' terminating in ``tips,'' where the tips are limits of critical points on the approximating flows $\Gamma^i_t$. For $i \gg 1$ the curvature of $\Gamma^i_t$ about these points becomes increasingly large, so that they must push in or out (depending on if they are maxima or minima) very quickly. By an argument using the Sturmian principle one can show such tips on $\Gamma^\infty_{t^\ast}$ must immediately smooth out, giving a sequence of times $t_i \to -\infty$ for which the flow is smooth. This in fact suffices, by a uniqueness argument and Theorem \ref{aag_thm3}, to see that $B_t$ is a smooth flow. At any rate with this last statement our construction is essentially complete, as we discuss below:
        
        \begin{proof}[Proof of Theorem \ref{mainthm}] By Corollary \ref{ancient_flow_existence}, there exists an ancient rotationally and reflection symmetric Brakke flow $B_t$ such that $\supp(B_0)$ satisfies the desired properties: nonempty, compact and connected, and nonconvex (in the sense of not bounding a convex domain). By Lemma \ref{silky_smooth} we observed that if $\supp(B_t)$ is noncompact for some $t < 0$ it must contain what were called cusps or prongs which could be ruled out by a Sturmian principle argument applied to the approximating flows, giving that $\supp(B_t)$ is bounded for all negative times. Along with ruling out on-axis singularities, the boundedness gave us the necessary $C^0$ control to use Theorem \ref{aag_grad} to see that $B_{t}$ is also smoothly embedded for all times $t < 0$ in the proof of Lemma \ref{smooth_seq} (since it is smooth we now conflate it with $\supp(B_t)$). Of course $B_t$ is trapped in a slab for all $t < 0$ since the approximating flows are, so this all gives item (1) of the theorem holds. Since convexity is a preserved pinching condition along the flow we see that $B_t$ is nonconvex for all $t < 0$. Symmetries of the approximating flows are preserved in the limit and $B_t$ is embedded as discussed above, so item (2) holds. Having that the profile curves $\Gamma^i_t$ of the approximating flows are all graphical over the axis of rotation one can see, along with a Sturmian principle argument using planes perpendicular to the axis of rotation, that $\Gamma^\infty_t$ is itself graphical over the axis of rotation; the number of critical points can't increase or change (non-strict) type in the limit giving item (3) of the theorem by the established non-convexity.    \end{proof}

\bibliographystyle{plain}
\bibliography{bibliography}

\end{document}